\newtheorem{theorem}{Theorem}[section]
\newtheorem{lemma}{Lemma}[section]
\newtheorem{remark}{Remark}[section]
\newtheorem{definition}{Definition}[section]
\def\BibTeX{{\rm B\kern-.05em{\sc i\kern-.025em b}\kern-.08em
    T\kern-.1667em\lower.7ex\hbox{E}\kern-.125emX}}
\begin{document}
\title{Non-fragile Finite-time Stabilization for Discrete Mean-field Stochastic Systems}
\author{Tianliang~Zhang, Feiqi~Deng, \IEEEmembership{Senior Member, IEEE}, Peng~Shi, \IEEEmembership{Fellow, IEEE}
%\thanks{This paragraph of the first footnote will contain the date on
%which you submitted your paper for review. It will also contain support
%information, including sponsor and financial support acknowledgment. For
%example, ``This work was supported in part by the U.S. Department of
%Commerce under Grant BS123456.'' }
\thanks{Corresponding author: F. Deng.}
\thanks{T. Zhang is with School of Automation, Nanjing University of Science and Technology, Nanjing  210094, Jiangsu Province, China, and also with the School of Automation Science and Engineering, South China University of Technology, Guangzhou 510640, Guangdong Province, China.
Email:  t\_lzhang@163.com.}
\thanks{F. Deng is with the School of Automation Science and Engineering, South China University of Technology, Guangzhou 510640, Guangdong Province, China.
Email:  aufqdeng@scut.edu.cn.}
\thanks{P. Shi is with the School of Electrical and Electronic Engineering, University of Adelaide, and also with the College of Engineering and Science, Victoria University, Australia.
Email:  peng.shi@adelaide.edu.au.}}

\maketitle

\begin{abstract}
In this paper, the problem of non-fragile finite-time stabilization for linear discrete mean-field stochastic systems is studied. The uncertain characteristics in control parameters are assumed to be random satisfying the Bernoulli distribution. A new approach called
 the ``state transition
matrix method" is introduced and some necessary and sufficient conditions are derived to solve the underlying stabilization problem. The Lyapunov theorem based on the
state transition matrix also makes a contribution to the discrete finite-time control theory. One practical example is provided to validate the effectiveness of the newly proposed control strategy.
\end{abstract}

\begin{IEEEkeywords}
Finite-time stabilization, stochastic systems, state transition
matrix, non-fragile control.
\end{IEEEkeywords}

\section{Introduction}
\label{sec:introduction} \IEEEPARstart{I}{t} is well-known that the
behavior of a single individual may affect the collective action.
Conversely, in many physical or sociological dynamical processes,
collective interactions can also  change individual judgment and
behavior. In order to investigate the influence from collective to
single individual, mean-field theory has been naturally developed
\cite{application1,Pickl,application4}.
 In particular, the
high popularity of quantum computers highlights the rising
importance of mean-field theory and its relevant applications,
because the mean-field method is a common and effective method to
deal with quantum many-body problems \cite{Pickl}. The well-known
mean-field type stochastic models depict the system equation
incorporating the mean of the state variables. In recent years, many
outstanding results on the control problem relating to mean-field
type stochastic systems have been proposed in the following
literature. For example, linear-quadratic optimal control problems
were discussed in \cite{new1,Niyuanhua1,Niyuanhua2}. Lin et al.
\cite{Linyaning1}  was   concerned with Stackelberg
game issue for mean-field stochastic systems. Stochastic maximum
principle was discussed in \cite{new2}. In addition, mean-field
stochastic systems with  network structure and time-delay have
attracted lots of scholars' attention, we refer the interested
readers to \cite{HuangJianhui,application7} for further references.
With respect to the stability and  stabilization problems,  Ma et
al. \cite{Malimin2} studied the mean square stability and spectral
assignment in a prescribed area region for linear discrete
mean-field stochastic (LDMFS) systems via the spectrum of a
generalized Lyapunov operator.

Finite-time stability focuses on the system state behavior only in a
specified finite-time horizon instead of the whole time interval,
which differentiates the finite-time stability from the classical
Lyapunov stability studied in \cite{JXS-RNC,ziji-auto,IJC2016} for
discrete stochastic stability  and
\cite{Khasminskii2012,maoxuerong} for stochastic stability of
continuous It\^o systems. In some practical applications, the
considered operating duration of the controlled system is often
limited \cite{JXS-IS,Qianchunjiang}, so, in some cases,  the
transient characteristics of systems may be more important than the
state convergence in an infinite-time horizon.  As it
is well-known that finite-time stability contains two kinds of
different concepts: one is defined as in
\cite{Amato2,Amato3,Amato,Lixiaodi,Niuyugang1,Yanzhiguo2,ziji-finite},
which is in fact finite-time bounded in some sense, while the other
one is defined as in
\cite{Chenweisheng,Hongyiguang,Qianchunjiang,Zhuquanxin,jinpengyu,Yinjuliang1,Yinjuliang2,Zhawenting},
where finite-time stability satisfies  both ``stability in Lyapunov
sense" and ``finite-time attractiveness".  Throughout this paper, we
study the first kind of finite-time stability and stabilization of
LDMFS systems. So from now on, when we refer to finite-time
stability  and stabilization, they are in finite-time boundedness
sense.  Finite-time stability and stabilization  have  been
researched for deterministic systems
\cite{Amato2,Amato3,Amato,Lixiaodi,Niuyugang1} and stochastic
systems \cite{Yanzhiguo2,ziji-finite}. In
\cite{Amato2,Amato3,Amato}, based on the  state transition matrix
(STM) of deterministic linear systems, necessary and sufficient
conditions have been obtained for finite-time stability and
stabilization. In \cite{Lixiaodi}, Lyapunov-type
 conditions for finite-time stability of continuous-time nonlinear  time-varying
delayed systems were presented. For continuous-time nonlinear
differential systems, \cite{Niuyugang1} proposed a suitable sliding
mode control law to drive the state trajectory into the prescribed
sliding surface within a finite time. \cite{Yanzhiguo2} and
\cite{ziji-finite} discussed the finite-time stability and
stabilization of continuous-and discrete-time stochastic systems,
respectively.  As can be seen,  most existing results on finite-time
stability and stabilization are about deterministic/stochastic
differential systems. However, regarding
 finite-time stability or stabilization for LDMFS systems, no
result has been reported so far. In fact,  we can only find few papers such as  \cite{Malimin2} to investigate
asymptotical mean square stability and stabilizability.  In
addition, most results  in stochastic systems are based on Lyapunov
function/functional method to present sufficient conditions but not
necessary conditions.

In practice, it is more likely to encounter some unexpected
failures. Once that happens, the performance of control systems is
certainly affected or even irreversible. Therefore, various design
frameworks for reliable controllers have been proposed, in which the
non-fragile control has attracted  a remarkable research interest in
recent years \cite{Kumar,Sakthivel,Yangcyb}.  However,
to the best knowledge of authors, there is no work  addressing the
non-fragile finite-time controller design for LDMFS systems. Up to
now, for LDMFS systems, we can only find few works such as
\cite{Niyuanhua1} on linear quadratic optimal control problem  and
\cite{zhangchen}  about cooperative linear quadratic dynamic
difference game.

In this paper, we investigate the   finite-time stabilization
of LDMFS systems via non-fragile control. The basic approach is
based on STMs  of LDMFS systems. In \cite{IJC2016}, STMs of linear
discrete stochastic systems were firstly presented and employed to
investigate the exact observability, exact/uniform detectability and
Lyapunov-type theorems of the following classical stochastic system
with multiplicative noises:
\begin{equation}\label{2019finite-system7}
\begin{cases}
x_k=H_kx_k+M_kx_kw_k,\\
y_k=G_kx_k.
\end{cases}
\end{equation}
In \cite{ziji-finite}, the method of STM was first used to discuss the
finite-time stability  of system (\ref{2019finite-system7}).
However, this method has not been applied to LDMFS  systems, this is
because that, in LDMFS systems, it is very difficult to establish
the STM expressions. The contributions of this paper are highlighted
as follows:
\begin{itemize}

  \item     Some specific expression forms of STMs
  have been established
  by iterative equations. Based on the  linear transformation and  the  augmented  system  method,
  we establish an equivalent relationship between the original LDMFS system
  with uncertain parameters and a certain augmented non-mean-field time-varying discrete stochastic system with random coefficients.

  \item
 Based on the STM approach, several necessary and sufficient conditions for  the
  finite-time stabilization of the LDTMF system have been obtained.

  \item With the increase of the length of the time interval of interest, the criteria  obtained by STM on finite-time
  stabilization  often leads to  higher computational
  complexity. To reduce the computational complexity, we construct  novel necessary and sufficient Lyapunov-type conditions by
  using the  introduced  STMs, and obtain a sufficient condition to guarantee the  finite-time
  stabilization in the form of linear matrix inequalities (LMIs), which is easier to use in designing the finite-time controller.
\end{itemize}

The rest of this paper is organized as follows: In Section  II, some
useful definitions and lemmas are introduced. In Section
 III, we investigate the STM approach of LDMFS systems
and its application to finite-time stabilization. By system
reconfiguration, we transform the original system  into a new
discrete stochastic system with state dependent noise. Necessary and
sufficient conditions are presented to solve the stabilization
problem. One example is given in Section   IV  to illustrate the
effectiveness of the theoretic results obtained. The conclusion is
drawn in  Section  VI.

For convenience, we present the notations used in this article here:
${\mathcal R}^n$ denotes the $n$-dimensional real Euclidean vector
space and ${\mathcal R}^{n\times m}$ stands for the space of all
$n\times m$ real matrices.  $\|\cdot\|$ means the Euclidean norm.
The notation $C>0$ means that the matrix $C$ is positive definite
real symmetric and $C<0$ means that the matrix $C$ is negative
definite real symmetric. $C'$ stands for the transpose of the matrix
or vector $C$.  $I_m$ denotes the $m\times m$ identity matrix. Given
matrices $F$ and $G$, the notation $F\otimes G$ stands for the
Kronecker product of $F$ and $G$. Given a positive integer $M$,
  ${\mathcal N}_M$ means the set $\{0,1, 2,\cdots, M\}$ and
  $diag(a_1,a_2,\cdots,a_m)$ means
  a diagonal matrix whose leading diagonal
  entries  are  $a_1,a_2,\cdots,a_m$. The notation $\mathcal {E}$ denotes the mathematical expectation
 operator.
\section{Preliminaries\label{sec:PR}}

\hspace{0.13in}In this section, we will consider the following LDMFS system
\begin{equation}\label{2019finite-system1}
\begin{cases}
x_{k+1}=A_1x_k+A_2\mathcal{E}x_k+Bu^F_k\\
\ \ \ \ \ \ \ \ \ \ +(C_1x_k+C_2\mathcal {E}x_k+Du^F_k)w_k,\\
x_0=\xi\in {\mathcal R}^n, \ k\in {\mathcal N}_{T-1},
\end{cases}
\end{equation}
where $x_k\in{\mathcal R}^n$ and $u^F_k\in{\mathcal R}^m$  are  the
state vector and actuator output vector with fault at time $k$,
respectively. Suppose that the initial state $x_0$ is a deterministic real  vector $\xi$. $\{w_k\}_{k\in {\mathcal N}_{T-1}}$ stands for the
system noise assumed to be a one-dimensional independent white
noise sequence defined on the  probability space $(\Omega,
{\mathcal F}, {\mathcal P})$.    Assume that $\mathcal {E}[w_k]=0$,
$\mathcal {E}[w_iw_k]=0$ when $i\neq k$, and $\mathcal
{E}[w_iw_k]=1$ when $i=k$. $A_1$, $A_2$, $B$, $C_1$, $C_2$ and $D$
are deterministic matrices with appropriate dimensions. In  most
practical systems, the uncertain parameters in feedback coefficients
usually cannot be ignored due to that the state feedback control may
be extremely sensitive or fragile with respect to errors. So we have
to consider  the following non-fragile state feedback control
\begin{eqnarray}\label{2019finite-u1}
u^F_k=(K_1+\alpha_k\Delta K_{1,k})x_k+(K_2+\alpha_k\Delta K_{2,k})\mathcal {E}x_k,
\end{eqnarray}
where $K_1$ and $K_2$ are the control gain matrices. $\Delta
K_{1,k}$ and $\Delta K_{2,k}$ are the uncertain parameters
satisfying $[\Delta K_{1,k}\ \Delta K_{2,k}]=MF_k[N_1\ N_2]$, where
$M$, $N_1$ and $N_2$   are   known matrices of appropriate dimensions,
and  $F_k$ is the uncertain  matrix  with $F'_kF_k\leq I$.
$\{\alpha_k\}_{k\in{\mathcal N}_{T-1}}$, which is independent of
 $\{w_k\}_{k\in{\mathcal N}_{T-1}}$,  is a
  finite sequence of independent
 random variables
  satisfying Bernoulli distribution with $\mathcal {P}(\alpha_k=1)=\bar{\alpha}$
  and $\mathcal {P}(\alpha_k=0)=1-\bar{\alpha}$, $0\leq\bar{\alpha}\leq1$.

\begin{definition}\label{def1}
Give any positive integer $T>0$, two positive numbers $\epsilon_1$ and $\epsilon_2$ with $0<\epsilon_1\le \epsilon_2$, and a sequence of positive definite symmetric matrices $\{R_k\}_{k\in{\mathcal N}_{T}}$.
If there exists a non-fragile
controller $u^F_k$ such that the following closed-loop system
{\small
\begin{eqnarray}\label{2019finite-system2}
\begin{cases}
x_{k+1}=(A_1+BK_1+\alpha_kB\Delta K_{1,k})x_k+(A_2+BK_2\\
\ \ \ \ \ \ \ \ \ +\alpha_kB\Delta K_{2,k})\mathcal {E}x_k+[(C_1+DK_1\\
\ \ \ \ \ \ \ \ \ +\alpha_kD\Delta K_{1,k})x_k+(C_2+DK_2+\alpha_kD\Delta K_{2,k})\\
\ \ \ \ \ \ \ \ \ \mathcal {E}x_k]w_k,\\
x_0=\xi, \  k\in {\mathcal N}_{T-1}
\end{cases}
\end{eqnarray}}
satisfies
\begin{equation}\label{2019finite-const}
x_0'R_0x_0\leq \epsilon_1\Rightarrow \mathcal {E}(x_k'R_kx_k)\leq \epsilon_2, \ \ \
\forall k \in {\mathcal N}_{T},
\end{equation}
then system (\ref{2019finite-system1}) is said to be finite-time
stabilizable with respect to $(\epsilon_1, \epsilon_2, T, \{R_k\}_{k\in{\mathcal N}_{T}})$.
\end{definition}

The following property will be used to prove our main results.

\begin{lemma}\label{2019finite-Lem2}\cite{ziji-finite}
For given matrices $F$, $G$, $H$ and $M$ of suitable dimensions, the following holds:
\begin{equation}
(F\otimes G)(H\otimes M)=(FH)\otimes(GM).
\end{equation}
\end{lemma}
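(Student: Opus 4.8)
The plan is to verify the identity directly from the block structure of the Kronecker product, since this is the cleanest route and avoids heavy index manipulation. First I would fix the dimensions: writing $F\in{\mathcal R}^{p\times q}$, $G\in{\mathcal R}^{r\times s}$, $H\in{\mathcal R}^{q\times t}$ and $M\in{\mathcal R}^{s\times u}$, so that the products $FH$ and $GM$ are both well-defined and, correspondingly, $F\otimes G\in{\mathcal R}^{pr\times qs}$ and $H\otimes M\in{\mathcal R}^{qs\times tu}$ are conformable for multiplication. This dimension check is precisely what guarantees that both sides of the asserted equation live in the same space ${\mathcal R}^{pr\times tu}$.

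Next I would recall that $F\otimes G$ is naturally partitioned into a $p\times q$ array of blocks, the $(i,j)$ block being the $r\times s$ matrix $f_{ij}G$, where $f_{ij}$ denotes the $(i,j)$ entry of $F$; likewise the $(j,k)$ block of $H\otimes M$ is $h_{jk}M$. The core computation is then block multiplication: the $(i,k)$ block of the product $(F\otimes G)(H\otimes M)$ equals
\begin{equation}
\sum_{j=1}^{q}(f_{ij}G)(h_{jk}M)=\sum_{j=1}^{q}f_{ij}h_{jk}\,GM=\Bigl(\sum_{j=1}^{q}f_{ij}h_{jk}\Bigr)GM.
\end{equation}
The only substantive manipulation here is pulling the scalars $f_{ij}$ and $h_{jk}$ out of the matrix products, which is legitimate because they are scalars. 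Recognizing the parenthesized sum as the $(i,k)$ entry $(FH)_{ik}$ of the ordinary product $FH$, this block becomes $(FH)_{ik}\,GM$, which is exactly the $(i,k)$ block of $(FH)\otimes(GM)$.

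Finally, since the two matrices agree block by block for every admissible $i$ and $k$, they are equal, which establishes the claim. The ``main obstacle'' here is purely bookkeeping rather than conceptual: one must keep a clean separation between the outer indices $i,j,k$ that range over blocks and the entries within each block, and confirm that the column-block partition of $F\otimes G$ into $q$ groups is compatible with the row-block partition of $H\otimes M$ into $q$ groups, so that block multiplication is valid. A fully index-based alternative, expanding both sides entrywise through the formula $(A\otimes B)_{(i-1)r+\ell,\,(j-1)s+n}=a_{ij}b_{\ell n}$, would reach the same conclusion at the cost of heavier notation, so I would favor the block argument.
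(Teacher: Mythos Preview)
Your proof is correct; this is the standard block-multiplication argument for the mixed-product property of the Kronecker product. Note that the paper does not supply its own proof of this lemma but simply quotes it as a known result from \cite{ziji-finite}, so there is no in-paper argument to compare against.
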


\section{State Transition Matrix and Finite-time Stabilization}

In this section, we will firstly build the STM of LDMFS system (\ref{2019finite-system1}) and then research the finite-time stabilization of LDMFS system (\ref{2019finite-system1}) based on the STM approach. With that
$\alpha_k$ and $x_k$ are independent of each other, taking the
mathematical expectation  in system (\ref{2019finite-system2}), it
follows that
\begin{align}\label{2019finite-system6}
\begin{cases}
\mathcal {E} x_{k+1}=[A_1+A_2+B(K_1+K_2)\\
\ \ \ \ \ \ \ \ \ \ \ +\bar{\alpha}B(\Delta K_{1,k}+\Delta K_{2,k})]\mathcal {E} x_k,\\
\mathcal {E} x_{0}=x_0=\xi,\ k\in {\mathcal N}_{T-1}.
\end{cases}
\end{align}
Subtracting (\ref{2019finite-system6}) from
(\ref{2019finite-system2}) and setting $\hat{x}_k=x_k-{\mathcal {E}
x}_k$, we have

\begin{eqnarray*}
\begin{cases}
\hat{x}_{k+1}=(A_1+BK_1+\alpha_kB\Delta K_{1,k})\hat{x}_{k}+[\alpha_kB(\Delta K_{1,k}\\
\ \ \ \ \ \ \ \ \ +\Delta K_{2,k})-\bar{\alpha}B(\Delta K_{1,k}+\Delta K_{2,k})]\mathcal{E}x_k\\
\ \ \ \ \ \ \ \ \ +[(C_1+DK_1+\alpha_kD\Delta K_{1,k})\hat{x}_{k}\\
\ \ \ \ \ \ \ \ \ +(C_2+C_1+DK_2+DK_1+\alpha_kD\Delta K_{2,k}\\
\ \ \ \ \ \ \ \ \ +\alpha_kD\Delta K_{1,k})\mathcal {E}x_k]w_k,\\
\hat{x}_{0}=0,\ k\in {\mathcal N}_{T-1}.
\end{cases}
\end{eqnarray*}
Letting $\tilde{x}_k=\left[\begin{array}{ccc}{\mathcal {E} x}_k\\
\hat{x}_k\end{array}\right]$, we can obtain the following augmented
system with respect to  $\tilde{x}_k$:
\begin{align}\label{2019finite-oldsystem4}
\begin{cases}
\tilde{x}_{k+1}=\tilde{A}_k\tilde{x}_k+\tilde{C}_k\tilde{x}_kw_k,\\
\tilde{x}_{0}=\left[\begin{array}{ccc}\xi\\0\end{array}\right],\ k\in {\mathcal N}_{T-1},
\end{cases}
\end{align}
where
{\small \begin{align*}
\tilde{A}_k=&\left[\begin{array}{ccc}A_1+A_2+B(K_1+K_2)+\bar{\alpha}B(\Delta K_{1,k}+\Delta K_{2,k})\\
\alpha_kB(\Delta K_{1,k}+\Delta K_{2,k})-\bar{\alpha}B(\Delta K_{1,k}+\Delta K_{2,k})
\end{array}\right.\\
&\left.\begin{array}{ccc}0\\ A_1+BK_1+\alpha_kB\Delta K_{1,k}\end{array}\right],\\
\tilde{C}_k=&\left[\begin{array}{ccc}
0\\
C_2+C_1+D(K_2+K_1)+\alpha_kD(\Delta K_{2,k}+\Delta K_{1,k})
\end{array}\right.\\
&\left.\begin{array}{ccc}0\\ C_1+DK_1+\alpha_kD\Delta K_{1,k}\end{array}\right].
\end{align*}}
Note that $\mathcal {E}\|\tilde{x}_k\|^2=(\mathcal {E}
x'_k)(\mathcal {E} x_k)+\mathcal {E}(\hat{x}'_k\hat{x}_k)= (\mathcal
{E} x'_k)(\mathcal {E} x_k)+\mathcal {E}[(x_k-\mathcal {E}
x_k)'(x_k-\mathcal {E} x_k)]=\mathcal {E}\|x_k\|^2$, $k\in {\mathcal
N}_{T}$. To study the second-order moment  of $\tilde{x}_k$ in
system (\ref{2019finite-oldsystem4}), we  need to  prove some
lemmas.

\begin{remark}\label{2019finite-rem1}
If we set
{\small
\begin{align*}
\mathcal {A}_{1,k}=&\left[\begin{array}{ccc}A_1+A_2+B(K_1+K_2)+\bar{\alpha}B(\Delta K_{1,k}+\Delta K_{2,k})\\
-\bar{\alpha}B(\Delta K_{1,k}+\Delta K_{2,k})
\end{array}\right.\\
&\left.\begin{array}{ccc}0\\ A_1+BK_1\end{array}\right],\\
\mathcal {A}_{2,k}=&\left[\begin{array}{ccc}0&0\\
B(\Delta K_{1,k}+\Delta K_{2,k})&B\Delta K_{1,k}
\end{array}\right],\\
\mathcal {C}_{1}=&\left[\begin{array}{ccc}
0&0\\
C_2+C_1+D(K_2+K_1)&C_1+DK_1
\end{array}\right]
\end{align*}}
and
$$
\mathcal {C}_{2,k}=\left[\begin{array}{ccc}
0&0\\
D(\Delta K_{2,k}+\Delta K_{1,k})&D\Delta K_{1,k}
\end{array}\right],
$$
then $\tilde{A}_k=\mathcal {A}_{1,k}+\alpha_k\mathcal {A}_{2,k}$, $\tilde{C}_k=\mathcal {C}_{1}+\alpha_k\mathcal {C}_{2,k}$ and system (\ref{2019finite-oldsystem4}) can be rewritten into
\begin{eqnarray}\label{2019finite-system4}
\begin{cases}
\tilde{x}_{k+1}=(\mathcal {A}_{1,k}+\alpha_k\mathcal {A}_{2,k})\tilde{x}_{k}+(\mathcal {C}_{1}+\alpha_k\mathcal {C}_{2,k})\tilde{x}_{k}w_k,\\
\tilde{x}_{0}=\left[\begin{array}{ccc}\xi\\0\end{array}\right],\ k\in {\mathcal N}_{T-1}.
\end{cases}
\end{eqnarray}
\end{remark}
Denote
\begin{align*}
&\psi_{l,k}:=\left[
\begin{array}{cccc}
\psi_{l,k+1}(\sqrt{\bar{\alpha}}\mathcal {A}_{1,k}+\sqrt{\bar{\alpha}}\mathcal {A}_{2,k})\\
\psi_{l,k+1}\sqrt{1-\bar{\alpha}}\mathcal {A}_{1,k}\\
\psi_{l,k+1}(\sqrt{\bar{\alpha}}\mathcal {C}_{1}+\sqrt{\bar{\alpha}}\mathcal {C}_{2,k})\\
\psi_{l,k+1}\sqrt{1-\bar{\alpha}}\mathcal {C}_{1}\\
\end{array}
\right],
\end{align*}
$l>k, \psi_{k,k}=I_{2n}, \forall k\in {\mathcal N}_{T}$.
In addition, there exists another expression for $\psi_{l,k}$ that is denoted by $\varphi_{l,k}$ in the following lemma. These two expressions are both needed in the  proof process of our subsequent results.

\begin{lemma}\label{2019finite-lemma2d}
Set
\begin{equation*}
\varphi_{l,k}=\left(I_{4^{l-k-1}}\otimes \left[\begin{array}{cccc}
\sqrt{\bar{\alpha}}\mathcal {A}_{1,l-1}+\sqrt{\bar{\alpha}}\mathcal {A}_{2,l-1}\\
\sqrt{1-\bar{\alpha}}\mathcal {A}_{1,l-1}\\
\sqrt{\bar{\alpha}}\mathcal {C}_{1}+\sqrt{\bar{\alpha}}\mathcal {C}_{2,l-1}\\
\sqrt{1-\bar{\alpha}}\mathcal {C}_{1}
\end{array}
\right]\right)\varphi_{l-1,k},
\end{equation*}
$l>k; \varphi_{k,k}=I_{2n}, \forall k\in {\mathcal
N}_{T}$. Then, we have the following relation:
\begin{equation}\label{eqvghvh}
\varphi_{l,k}=\psi_{l,k}, \ \forall l>k\in\mathcal {N}_T.
\end{equation}
\end{lemma}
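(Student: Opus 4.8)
The plan is to show that $\psi_{l,k}$ and $\varphi_{l,k}$ are two different unrollings of one and the same product of Kronecker factors, differing only in the order in which the defining recursion is resolved. First I would abbreviate the common $8n\times 2n$ column block by
\[
L_j:=\left[\begin{array}{c}
\sqrt{\bar{\alpha}}\mathcal{A}_{1,j}+\sqrt{\bar{\alpha}}\mathcal{A}_{2,j}\\
\sqrt{1-\bar{\alpha}}\mathcal{A}_{1,j}\\
\sqrt{\bar{\alpha}}\mathcal{C}_{1}+\sqrt{\bar{\alpha}}\mathcal{C}_{2,j}\\
\sqrt{1-\bar{\alpha}}\mathcal{C}_{1}
\end{array}\right],
\]
so that the recursion defining $\varphi$ is $\varphi_{l,k}=(I_{4^{l-k-1}}\otimes L_{l-1})\varphi_{l-1,k}$, which unrolls on the first index. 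The key preliminary observation is that the block stacking defining $\psi_{l,k}$ can be recast in exactly the same Kronecker shape: each of its four row blocks is $\psi_{l,k+1}$ post-multiplied by the corresponding row block of $L_k$, and for any matrix $X$ the vertical concatenation of $XB_1,\dots,XB_4$ equals $(I_4\otimes X)$ times the vertical concatenation of $B_1,\dots,B_4$. Hence $\psi_{l,k}=(I_4\otimes\psi_{l,k+1})L_k$, a recursion that unrolls on the second index.

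Next I would introduce the explicit target product
\[
P_{l,k}:=(I_{4^{l-k-1}}\otimes L_{l-1})(I_{4^{l-k-2}}\otimes L_{l-2})\cdots(I_{4^{0}}\otimes L_{k})
\]
and prove $\varphi_{l,k}=P_{l,k}$ and $\psi_{l,k}=P_{l,k}$ separately, after which (\ref{eqvghvh}) is immediate. The identity $\varphi_{l,k}=P_{l,k}$ follows by a direct induction on $l$ with $k$ fixed: the base case $l=k+1$ gives $\varphi_{k+1,k}=L_k=P_{k+1,k}$, and the $\varphi$ recursion simply prepends the leftmost factor $I_{4^{l-k-1}}\otimes L_{l-1}$. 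The identity $\psi_{l,k}=P_{l,k}$ I would prove by downward induction on $k$ with $l$ fixed: the base case $k=l-1$ gives $\psi_{l,l-1}=(I_4\otimes I_{2n})L_{l-1}=L_{l-1}=P_{l,l-1}$, and in the inductive step I substitute $\psi_{l,k+1}=P_{l,k+1}$ into $\psi_{l,k}=(I_4\otimes\psi_{l,k+1})L_k$.

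The only real work, and the step I expect to be the main obstacle, lies in this last substitution, where the two opposite recursion directions must be reconciled: I must push the operator $I_4\otimes(\cdot)$ through the product $P_{l,k+1}=\prod_{j=k+1}^{l-1}(I_{4^{j-k-1}}\otimes L_j)$. This is exactly where Lemma~\ref{2019finite-Lem2} enters. Applying it with $F=H=I_4$ gives $(I_4\otimes X)(I_4\otimes Y)=I_4\otimes(XY)$, so $I_4\otimes(\cdot)$ distributes over the entire product; then associativity of the Kronecker product together with $I_4\otimes I_{4^{m}}=I_{4^{m+1}}$ rewrites each factor as $I_4\otimes(I_{4^{j-k-1}}\otimes L_j)=I_{4^{j-k}}\otimes L_j$. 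Consequently $(I_4\otimes P_{l,k+1})L_k=\big[\prod_{j=k+1}^{l-1}(I_{4^{j-k}}\otimes L_j)\big](I_{4^{0}}\otimes L_k)=P_{l,k}$, closing the induction and yielding $\psi_{l,k}=P_{l,k}=\varphi_{l,k}$. Throughout I would track block sizes — both $\psi_{l,k}$ and $\varphi_{l,k}$ are $4^{l-k}(2n)\times 2n$ — to keep every Kronecker and matrix product conformable.
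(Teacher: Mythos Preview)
Your proposal is correct and follows essentially the same approach as the paper: both arguments hinge on rewriting the backward recursion for $\psi$ as $\psi_{l,k}=(I_4\otimes\psi_{l,k+1})L_k$, then using the mixed-product property (Lemma~\ref{2019finite-Lem2}) together with $I_4\otimes(I_{4^{m}}\otimes L_j)=I_{4^{m+1}}\otimes L_j$ to reconcile the two recursion directions by induction on the gap $l-k$. Your only difference is organizational---you name the common unrolled product $P_{l,k}$ explicitly and prove $\varphi_{l,k}=P_{l,k}$ and $\psi_{l,k}=P_{l,k}$ separately, whereas the paper inducts directly on $l-k$ to show $\psi_{l,k}=\varphi_{l,k}$; the underlying computation is identical.
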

\textbf {Proof.} The proof can be found in APPENDIX.

\begin{remark}
  By Lemma~\ref{2019finite-lemma2d}, the matrices
$\psi_{\cdot,\cdot}$ and $\varphi_{\cdot,\cdot}$ actually represent
the same matrix, but the difference lies in different iterative
expressions. $\varphi_{j,i}$ is calculated  in forward time, while
$\psi_{j,i}$ is calculated  in backward time. Therefore,
$\varphi_{j,i}$ is  in line with the characteristics of the STM   of
deterministic linear discrete systems.      The introduction of
$\varphi_{j,i}$ is one important contribution of this paper.
Lemma~\ref{2019finite-lemma2d} is new even in non-mean-field
stochastic systems, and plays an important role in this  paper.
\end{remark}

In the following, we uniformly denote  $\psi_{\cdot,\cdot}$ and
$\varphi_{\cdot,\cdot}$  as  $\psi_{\cdot,\cdot}$ for simplicity.
Moreover, we define another matrix $\phi_{l,k}$ as {\small
\begin{eqnarray}\label{2019finite-as12}
\phi_{l,k}=\left[
\begin{array}{cccc}
[I_{4^{l-k-1}}\otimes (\sqrt{\bar{\alpha}}\mathcal {A}_{1,l-1}+\sqrt{\bar{\alpha}}\mathcal {A}_{2,l-1})]\phi_{l-1,k}\\
(I_{4^{l-k-1}}\otimes \sqrt{1-\bar{\alpha}}\mathcal {A}_{1,l-1})\phi_{l-1,k}\\
(I_{4^{l-k-1}}\otimes (\sqrt{\bar{\alpha}}\mathcal {C}_{1}+\sqrt{\bar{\alpha}}\mathcal {C}_{2,l-1}))\phi_{l-1,k}\\
(I_{4^{l-k-1}}\otimes \sqrt{1-\bar{\alpha}}\mathcal
{C}_{1})\phi_{l-1,k}
\end{array}
\right],
\end{eqnarray}}
$l>k, \phi_{k,k}=I_{2n}, \forall k\in {\mathcal N}_T$. On the basis of Lemma~\ref{2019finite-lemma2d}, we further give the
following result for the state transition of
system (\ref{2019finite-system4}) in mean square sense.

\begin{lemma}\label{2019finite-lemma2}
For system (\ref{2019finite-system4}), we have  the following iterative
 relations:
\begin{align}\label{2019finite-as13}
\mathcal
{E}\|\tilde{x}_l\|^2=\mathcal
{E}\|\psi_{l,k}\tilde{x}_k\|^2, \ l\ge k,
\end{align}
where $\psi_{l,k}$ is defined as in  Lemma~\ref{2019finite-lemma2d}.
\begin{align}\label{2019finite-asa13}
\mathcal {E}\|\tilde{x}_l\|^2= \mathcal
{E}\|\phi_{l,k}\tilde{x}_k\|^2, \ l\ge k,
\end{align} where $\phi_{l,k}$ is defined as in
(\ref{2019finite-as12}).
\end{lemma}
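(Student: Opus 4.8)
The plan is to reduce both identities to a single one-step mean-square recursion and then iterate. First I would introduce, for any fixed symmetric matrix $P\in\mathcal{R}^{2n\times 2n}$, the one-step operator
\begin{align*}
\mathcal{L}_k(P)=&\,\bar{\alpha}(\mathcal{A}_{1,k}+\mathcal{A}_{2,k})'P(\mathcal{A}_{1,k}+\mathcal{A}_{2,k})+(1-\bar{\alpha})\mathcal{A}_{1,k}'P\mathcal{A}_{1,k}\\
&+\bar{\alpha}(\mathcal{C}_{1}+\mathcal{C}_{2,k})'P(\mathcal{C}_{1}+\mathcal{C}_{2,k})+(1-\bar{\alpha})\mathcal{C}_{1}'P\mathcal{C}_{1},
\end{align*}
and establish the conditional identity $\mathcal{E}[\tilde{x}_{k+1}'P\tilde{x}_{k+1}\mid\mathcal{F}_k]=\tilde{x}_k'\mathcal{L}_k(P)\tilde{x}_k$, where $\mathcal{F}_k$ is the $\sigma$-algebra generated by $\{\alpha_j,w_j\}_{j<k}$. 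Substituting the dynamics (\ref{2019finite-system4}) into $\tilde{x}_{k+1}'P\tilde{x}_{k+1}$ and expanding, the terms linear in $w_k$ drop out because $w_k$ is zero-mean and independent of $\alpha_k$ and $\tilde{x}_k$, while the quadratic term carries a factor $\mathcal{E}[w_k^2]=1$. Averaging the surviving expressions over the Bernoulli variable $\alpha_k$ and using $\alpha_k^2=\alpha_k$ together with $\mathcal{E}[\alpha_k]=\bar{\alpha}$ collapses $\mathcal{E}[(\mathcal{A}_{1,k}+\alpha_k\mathcal{A}_{2,k})'P(\mathcal{A}_{1,k}+\alpha_k\mathcal{A}_{2,k})]$ into the first line of $\mathcal{L}_k(P)$, and likewise for the $\mathcal{C}$-part. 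This step is routine; the only care needed is that $\psi_{l,k}$, $\varphi_{l,k}$ and $\phi_{l,k}$ are deterministic (given the fixed uncertainty realization $F_k$), so they may be pulled out of the expectation as constant matrices.

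Next I would read off from the backward recursion defining $\psi_{l,k}$ that its Gram matrix satisfies exactly
\begin{equation*}
\psi_{l,k}'\psi_{l,k}=\mathcal{L}_k\big(\psi_{l,k+1}'\psi_{l,k+1}\big),
\end{equation*}
since $\psi_{l,k}$ is the vertical stacking of the four blocks $\psi_{l,k+1}\sqrt{\bar{\alpha}}(\mathcal{A}_{1,k}+\mathcal{A}_{2,k})$, $\psi_{l,k+1}\sqrt{1-\bar{\alpha}}\mathcal{A}_{1,k}$, $\psi_{l,k+1}\sqrt{\bar{\alpha}}(\mathcal{C}_{1}+\mathcal{C}_{2,k})$ and $\psi_{l,k+1}\sqrt{1-\bar{\alpha}}\mathcal{C}_{1}$, so that $\psi_{l,k}'\psi_{l,k}$ is precisely the sum of the four corresponding quadratic forms. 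Combining this with the one-step identity and the tower property gives the propagation rule $\mathcal{E}\|\psi_{l,k+1}\tilde{x}_{k+1}\|^2=\mathcal{E}\|\psi_{l,k}\tilde{x}_k\|^2$. I then fix $l$ and argue by backward induction on $k$: the base case $k=l$ is immediate from $\psi_{l,l}=I_{2n}$, and the inductive step is exactly the propagation rule, yielding $\mathcal{E}\|\tilde{x}_l\|^2=\mathcal{E}\|\psi_{l,k}\tilde{x}_k\|^2$ for all $k\le l$, which is (\ref{2019finite-as13}).

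Finally, for (\ref{2019finite-asa13}) I would compare the forward recursion (\ref{2019finite-as12}) defining $\phi_{l,k}$ with $\varphi_{l,k}=\psi_{l,k}$ from Lemma~\ref{2019finite-lemma2d}. Unrolling both recursions shows that each of $\phi_{l,k}$ and $\psi_{l,k}$ is a stacking of all ordered products of the four elementary factors $\sqrt{\bar{\alpha}}(\mathcal{A}_{1,j}+\mathcal{A}_{2,j})$, $\sqrt{1-\bar{\alpha}}\mathcal{A}_{1,j}$, $\sqrt{\bar{\alpha}}(\mathcal{C}_{1}+\mathcal{C}_{2,j})$, $\sqrt{1-\bar{\alpha}}\mathcal{C}_{1}$, taken over the four choices at each of the $l-k$ steps $j=l-1,\dots,k$; the sole difference is whether the outermost ($j=l-1$) or innermost ($j=k$) factor is grouped first, which merely permutes the block rows. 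Since permuting block rows leaves the Gram matrix invariant, $\phi_{l,k}'\phi_{l,k}=\psi_{l,k}'\psi_{l,k}$, whence $\|\phi_{l,k}\tilde{x}_k\|^2=\|\psi_{l,k}\tilde{x}_k\|^2$ and (\ref{2019finite-asa13}) follows from (\ref{2019finite-as13}); alternatively (\ref{2019finite-asa13}) can be proved directly by forward induction on $l$ with the same operator $\mathcal{L}_k$. I expect the main obstacle to lie in this last bookkeeping step — tracking the Kronecker factors $I_{4^{l-k-1}}$ and confirming that the forward ($\phi$) and backward ($\psi$) expansions produce identical Gram matrices — rather than in the probabilistic computation, which reduces entirely to the one-step identity.
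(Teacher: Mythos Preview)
Your proposal is correct and follows essentially the same approach as the paper: backward induction on $k$ using the one-step mean-square recursion for (\ref{2019finite-as13}), followed by the observation that $\phi_{l,k}$ and $\psi_{l,k}$ differ only by a block-row permutation so that $\phi_{l,k}'\phi_{l,k}=\psi_{l,k}'\psi_{l,k}$ for (\ref{2019finite-asa13}). Your packaging via the operator $\mathcal{L}_k$ is a bit cleaner than the paper's explicit expansion, but the underlying argument is the same.
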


\textbf {Proof.} The proof can be found in APPENDIX.

\begin{remark}
The matrices $\phi_{\cdot,\cdot}$ and $\psi_{\cdot,\cdot}$ can be
regarded as the STMs in
 mean square sense of discrete stochastic system (\ref{2019finite-system4}) with random
 coefficients.   Different from deterministic systems,  STMs are  not
 unique in discrete stochastic systems, which have several expression forms.
\end{remark}

We are now in a  position to  make  the  connection between the
finite-time stabilization of mean-field system
(\ref{2019finite-system1}) and another classical time-varying
stochastic system. Set $\bar {R}_k=diag(R_k, R_k)$ and
$\bar{x}_k=\bar {R}^{\frac{1}{2}}_k\tilde{x}_k$, then we have the
following lemma.

\begin{lemma}\label{2019finite-lemma5}
System (\ref{2019finite-system1}) is finite-time stabilizable with
respect to $(\epsilon_1, \epsilon_2, T, \{R_k\}_{k\in{\mathcal N}_{T}})$  if and only if (iff)
the system
\begin{eqnarray}\label{2019finite-system5}
\begin{cases}
\bar{x}_{k+1}=(\bar{\mathcal {A}}_{1,k}+\alpha_k\bar{\mathcal {A}}_{2,k})\bar{x}_k
+(\bar{\mathcal {C}}_{1,k}+\alpha_k\bar{\mathcal {C}}_{2,k})\bar{x}_kw_k,\\
\bar{x}_{0}=\left[\begin{array}{ccc}
R_0^{\frac{1}{2}}\xi\\0\end{array}\right], k\in {\mathcal N}_{T-1}
\end{cases}
\end{eqnarray}
is finite-time stable with respect to $(\epsilon_1, \epsilon_2, T, I_{2n})$, where
\begin{align*}
\bar{\mathcal {A}}_{1,k}=\bar
{R}_{k+1}^{\frac{1}{2}}\mathcal {A}_{1,k}\bar{R}_k^{-\frac{1}{2}},\
\bar{\mathcal {A}}_{2,k}=\bar{R}_{k+1}^{\frac{1}{2}}\mathcal
{A}_{2,k}\bar {R}_k^{-\frac{1}{2}},\\
 \bar{\mathcal {C}}_{1,k}=\bar {R}_{k+1}^{\frac{1}{2}}\mathcal {C}_{1}\bar
{R}_k^{-\frac{1}{2}},\ \bar{\mathcal {C}}_{2,k}=\bar
{R}_{k+1}^{\frac{1}{2}}\mathcal {C}_{2,k}\bar{R}_k^{-\frac{1}{2}}.
\end{align*}
Moreover, the corresponding STMs $\bar{\psi}_{l,k}$ and $\bar{\phi}_{l,k}$ are  given by
\begin{eqnarray*}
\begin{cases}
\bar{\phi}_{l,k}=\left[
\begin{array}{cccc}
(I_{4^{l-k-1}}\otimes (\sqrt{\bar{\alpha}}\bar{\mathcal {A}}_{1,l-1}+\sqrt{\bar{\alpha}}\bar{\mathcal {A}}_{2,l-1}))\bar{\phi}_{l-1,k}\\
(I_{4^{l-k-1}}\otimes \sqrt{1-\bar{\alpha}}\bar{\mathcal {A}}_{1,l-1})\bar{\phi}_{l-1,k}\\
(I_{4^{l-k-1}}\otimes (\sqrt{\bar{\alpha}}\bar{\mathcal {C}}_{1,k}+\sqrt{\bar{\alpha}}\bar{\mathcal {C}}_{2,l-1}))\bar{\phi}_{l-1,k}\\
(I_{4^{l-k-1}}\otimes \sqrt{1-\bar{\alpha}}\bar{\mathcal
{C}}_{1,k})\bar{\phi}_{l-1,k}
\end{array}
\right],\\
\bar{\phi}_{k,k}=I_{2n},
\end{cases}
\end{eqnarray*}
and
\begin{eqnarray*}
\begin{cases}
\bar{\psi}_{l,k}=\left[
\begin{array}{cccc}
\bar{\psi}_{l,k+1}(\sqrt{\bar{\alpha}}\bar{\mathcal {A}}_{1,k}+\sqrt{\bar{\alpha}}\bar{\mathcal {A}}_{2,k})\\
\bar{\psi}_{l,k+1}\sqrt{1-\bar{\alpha}}\bar{\mathcal {A}}_{1,k}\\
\bar{\psi}_{l,k+1}(\sqrt{\bar{\alpha}}\bar{\mathcal {C}}_{1,k}+\sqrt{\bar{\alpha}}\bar{\mathcal {C}}_{2,k})\\
\bar{\psi}_{l,k+1}\sqrt{1-\bar{\alpha}}\bar{\mathcal {C}}_{1,k}\\
\end{array}
\right],\\
\bar{\psi}_{k,k}=I_{2n},
\end{cases}
\end{eqnarray*}
respectively.
\end{lemma}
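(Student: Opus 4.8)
The plan is to show that, for any fixed non-fragile controller of the form (\ref{2019finite-u1}), the finite-time constraint (\ref{2019finite-const}) on the closed-loop system (\ref{2019finite-system2}) is \emph{identical} to the finite-time stability requirement on the transformed system (\ref{2019finite-system5}); the ``iff'' at the level of stabilizability then follows because both sides range over the same controller. The engine of the argument is a single quadratic-form identity that converts the $R_k$-weighting on $x_k$ into the trivial weighting $I_{2n}$ on $\bar x_k$.

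First I would establish the key identity $\mathcal{E}(x_k'R_kx_k)=\mathcal{E}\|\bar x_k\|^2$ for all $k\in\mathcal{N}_T$. Writing $x_k=\mathcal{E}x_k+\hat x_k$ with $\mathcal{E}\hat x_k=0$ and expanding, the cross terms vanish because $\mathcal{E}x_k$ is deterministic and $\mathcal{E}\hat x_k=0$, leaving
\begin{equation*}
\mathcal{E}(x_k'R_kx_k)=(\mathcal{E}x_k)'R_k(\mathcal{E}x_k)+\mathcal{E}(\hat x_k'R_k\hat x_k).
\end{equation*}
With the augmented state $\tilde x_k=[(\mathcal{E}x_k)',\,\hat x_k']'$ of (\ref{2019finite-system4}) and $\bar R_k=\mathrm{diag}(R_k,R_k)$ this is exactly $\mathcal{E}(\tilde x_k'\bar R_k\tilde x_k)$, and since $\bar x_k=\bar R_k^{1/2}\tilde x_k$ we obtain $\mathcal{E}\|\bar x_k\|^2=\mathcal{E}(\tilde x_k'\bar R_k\tilde x_k)=\mathcal{E}(x_k'R_kx_k)$. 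Specializing at $k=0$, where $\tilde x_0=[\xi',0]'$ is deterministic, yields $\|\bar x_0\|^2=\xi'R_0\xi=x_0'R_0x_0$. This is the step requiring the most care, since it is precisely the splitting of the second moment into ``mean'' and ``fluctuation'' parts that forces the block-diagonal choice $\bar R_k=\mathrm{diag}(R_k,R_k)$ and makes the reduction to the identity weighting possible.

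Next I would derive the dynamics of $\bar x_k$. Starting from (\ref{2019finite-system4}), I substitute $\tilde x_k=\bar R_k^{-1/2}\bar x_k$ and left-multiply by $\bar R_{k+1}^{1/2}$; using $\bar R_{k+1}^{1/2}(\mathcal{A}_{1,k}+\alpha_k\mathcal{A}_{2,k})\bar R_k^{-1/2}=\bar{\mathcal{A}}_{1,k}+\alpha_k\bar{\mathcal{A}}_{2,k}$ and the analogous identity for the $\mathcal{C}$-block produces precisely (\ref{2019finite-system5}) with the coefficient matrices as stated, and the initial condition becomes $\bar x_0=[(R_0^{1/2}\xi)',0]'$. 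Combining this with the identity above, the hypothesis $x_0'R_0x_0\le\epsilon_1$ reads $\|\bar x_0\|^2\le\epsilon_1$ and the conclusion $\mathcal{E}(x_k'R_kx_k)\le\epsilon_2$ reads $\mathcal{E}\|\bar x_k\|^2\le\epsilon_2$; by Definition~\ref{def1} (taken with weighting $I_{2n}$) these two statements together are exactly finite-time stability of (\ref{2019finite-system5}) with respect to $(\epsilon_1,\epsilon_2,T,I_{2n})$. Since the map from a controller (\ref{2019finite-u1}) to the coefficients of (\ref{2019finite-system5}) is the same on both sides, the existence of a stabilizing controller is equivalent to finite-time stability of (\ref{2019finite-system5}), which is the asserted ``iff''.

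Finally, the stated formulas for $\bar\psi_{l,k}$ and $\bar\phi_{l,k}$ require no separate argument: they are obtained by applying the definitions in Lemma~\ref{2019finite-lemma2d} and (\ref{2019finite-as12}) verbatim to system (\ref{2019finite-system5}), i.e.\ by replacing $\mathcal{A}_{1,k},\mathcal{A}_{2,k},\mathcal{C}_1,\mathcal{C}_{2,k}$ with their barred counterparts $\bar{\mathcal{A}}_{1,k},\bar{\mathcal{A}}_{2,k},\bar{\mathcal{C}}_{1,k},\bar{\mathcal{C}}_{2,k}$. By Lemma~\ref{2019finite-lemma2} these are then the mean-square STMs of (\ref{2019finite-system5}), so that $\mathcal{E}\|\bar x_l\|^2=\mathcal{E}\|\bar\psi_{l,k}\bar x_k\|^2=\mathcal{E}\|\bar\phi_{l,k}\bar x_k\|^2$ is available for the subsequent finite-time criteria.
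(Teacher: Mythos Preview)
Your proposal is correct and follows essentially the same approach as the paper: derive the $\bar x_k$-dynamics by the change of variable $\bar x_k=\bar R_k^{1/2}\tilde x_k$ and then read off the STMs by applying the general constructions of Lemma~\ref{2019finite-lemma2d}, (\ref{2019finite-as12}) and Lemma~\ref{2019finite-lemma2} to the transformed system. The paper's proof is terser---it only writes the $\bar x_{k+1}$-recursion and cites Lemma~3.2---whereas you additionally spell out the weighted second-moment identity $\mathcal{E}(x_k'R_kx_k)=\mathcal{E}(\tilde x_k'\bar R_k\tilde x_k)=\mathcal{E}\|\bar x_k\|^2$ (the paper records only the unweighted version $\mathcal{E}\|\tilde x_k\|^2=\mathcal{E}\|x_k\|^2$ just before Remark~\ref{2019finite-rem1}), but this is a fill-in rather than a different method.
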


{\textbf {Proof.}} The proof can be found in APPENDIX.

The next two lemmas  are dedicated to finding the relationship between  $\phi_{l,k}$ and  $\bar{\phi}_{l,k}$, and  $\psi_{l,k}$ and
$\bar{\psi}_{l,k}$, respectively.

\begin{lemma}\label{2019finite-Lem1}
For any $0\le k\le l$, assume that the matrices $\phi_{l,k}$ and $\bar{\phi}_{l,k}$ are
STMs of systems
(\ref{2019finite-system4}) and (\ref{2019finite-system5}), respectively. Then the
following relation always holds:
\begin{align*}
\phi'_{l,k}(I_{4^{l-k}}\otimes \bar{R}_l)\phi_{l,k}=\bar
{R}_k^{\frac{1}{2}}\bar{\phi}'_{l,k}\bar{\phi}_{l,k}\bar
{R}_k^{\frac{1}{2}}.
\end{align*}
\end{lemma}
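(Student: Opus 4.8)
The plan is to reduce the claimed quadratic identity to a direct linear relation between the two transition matrices. Specifically, I would first prove, by induction on $l-k\ge 0$, the auxiliary identity
\[
\phi_{l,k}=\bigl(I_{4^{l-k}}\otimes \bar{R}_l^{-\frac{1}{2}}\bigr)\,\bar{\phi}_{l,k}\,\bar{R}_k^{\frac{1}{2}},\qquad 0\le k\le l,
\]
where $\bar{R}_k^{\frac{1}{2}}=diag(R_k^{\frac{1}{2}},R_k^{\frac{1}{2}})$ is symmetric and invertible because each $R_k>0$. A naive induction on the quadratic form itself does not close: unfolding one layer of the recursion (\ref{2019finite-as12}) turns the weight $I_{4^{l-k}}\otimes\bar{R}_l$ into $I_{4^{l-k-1}}\otimes S$ with $S=\sum_i X_i'\bar{R}_l X_i$, and $S\neq\bar{R}_{l-1}$ in general, so the level-$(l-1)$ hypothesis cannot be reinserted. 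The auxiliary identity bypasses this telescoping failure.

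For the induction, at $l=k$ both $\phi_{k,k}$ and $\bar{\phi}_{k,k}$ equal $I_{2n}$ and $4^0=1$, so the claim reduces to $I_{2n}=\bar{R}_k^{-\frac{1}{2}}\bar{R}_k^{\frac{1}{2}}$, which holds. For the step, write one layer of (\ref{2019finite-as12}) as $\phi_{l,k}=\Lambda_{l-1}\phi_{l-1,k}$, where $\Lambda_{l-1}$ is the vertical stack of the four blocks $I_{4^{l-k-1}}\otimes X_i$ with $X_1=\sqrt{\bar{\alpha}}(\mathcal{A}_{1,l-1}+\mathcal{A}_{2,l-1})$, $X_2=\sqrt{1-\bar{\alpha}}\,\mathcal{A}_{1,l-1}$, $X_3=\sqrt{\bar{\alpha}}(\mathcal{C}_1+\mathcal{C}_{2,l-1})$, $X_4=\sqrt{1-\bar{\alpha}}\,\mathcal{C}_1$, and $\bar{\Lambda}_{l-1}$ is the analogous stack built from the barred blocks $\bar{X}_i$. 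The definitions in Lemma~\ref{2019finite-lemma5} give $\bar{X}_i=\bar{R}_l^{\frac{1}{2}}X_i\bar{R}_{l-1}^{-\frac{1}{2}}$ for every $i$, hence $X_i\bar{R}_{l-1}^{-\frac{1}{2}}=\bar{R}_l^{-\frac{1}{2}}\bar{X}_i$. Applying Lemma~\ref{2019finite-Lem2} blockwise, the $i$-th block of $\Lambda_{l-1}(I_{4^{l-k-1}}\otimes\bar{R}_{l-1}^{-\frac{1}{2}})$ equals $(I_{4^{l-k-1}}\otimes\bar{R}_l^{-\frac{1}{2}})(I_{4^{l-k-1}}\otimes\bar{X}_i)$; collecting the four blocks and using $I_4\otimes I_{4^{l-k-1}}=I_{4^{l-k}}$ yields
\[
\Lambda_{l-1}\bigl(I_{4^{l-k-1}}\otimes\bar{R}_{l-1}^{-\frac{1}{2}}\bigr)=\bigl(I_{4^{l-k}}\otimes\bar{R}_l^{-\frac{1}{2}}\bigr)\bar{\Lambda}_{l-1}.
\]
Inserting the inductive hypothesis $\phi_{l-1,k}=(I_{4^{l-k-1}}\otimes\bar{R}_{l-1}^{-\frac{1}{2}})\bar{\phi}_{l-1,k}\bar{R}_k^{\frac{1}{2}}$ and using $\bar{\Lambda}_{l-1}\bar{\phi}_{l-1,k}=\bar{\phi}_{l,k}$ gives the auxiliary identity at level $l$.

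Finally I would substitute the auxiliary identity into the left-hand side of the lemma. Since $I_{4^{l-k}}\otimes\bar{R}_l^{-\frac{1}{2}}$ and $\bar{R}_k^{\frac{1}{2}}$ are symmetric,
\[
\phi'_{l,k}(I_{4^{l-k}}\otimes\bar{R}_l)\phi_{l,k}=\bar{R}_k^{\frac{1}{2}}\bar{\phi}'_{l,k}\bigl(I_{4^{l-k}}\otimes\bar{R}_l^{-\frac{1}{2}}\bigr)\bigl(I_{4^{l-k}}\otimes\bar{R}_l\bigr)\bigl(I_{4^{l-k}}\otimes\bar{R}_l^{-\frac{1}{2}}\bigr)\bar{\phi}_{l,k}\bar{R}_k^{\frac{1}{2}}.
\]
By Lemma~\ref{2019finite-Lem2} the three Kronecker factors collapse to $I_{4^{l-k}}\otimes(\bar{R}_l^{-\frac{1}{2}}\bar{R}_l\bar{R}_l^{-\frac{1}{2}})=I_{4^{l-k}\cdot 2n}$, leaving precisely $\bar{R}_k^{\frac{1}{2}}\bar{\phi}'_{l,k}\bar{\phi}_{l,k}\bar{R}_k^{\frac{1}{2}}$, which is the assertion. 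I expect the main obstacle to be purely organizational: faithfully tracking the Kronecker block structure through the four-fold vertical stacking, and in particular exploiting the reshuffling $I_{4^{l-k}}\otimes\bar{R}_l=I_4\otimes(I_{4^{l-k-1}}\otimes\bar{R}_l)$ that aligns the block-diagonal weight with the stacked layers so that Lemma~\ref{2019finite-Lem2} can be applied blockwise.
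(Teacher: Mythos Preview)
Your proposal is correct and follows essentially the same route as the paper: both reduce the quadratic identity to the linear relation $(I_{4^{l-k}}\otimes\bar{R}_l^{1/2})\phi_{l,k}=\bar{\phi}_{l,k}\bar{R}_k^{1/2}$ (your auxiliary identity is its left-inverted form) and prove it by induction on $l-k$, using the block recursion (\ref{2019finite-as12}) for $\phi$ and $\bar{\phi}$ together with $\bar{X}_i=\bar{R}_l^{1/2}X_i\bar{R}_{l-1}^{-1/2}$ from Lemma~\ref{2019finite-lemma5}. The only cosmetic difference is that you package the four blocks as $\Lambda_{l-1}$ and carry the weight $\bar{R}_l^{-1/2}$ through to the end, whereas the paper carries $\bar{R}_l^{1/2}$ and inserts $\bar{R}_{k+i-1}^{-1/2}\bar{R}_{k+i-1}^{1/2}$ explicitly inside each block.
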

{\textbf {Proof.}} The proof can be found in APPENDIX.

\begin{lemma}\label{2019finite-Lem3}
For any $0\le k\le l$, assume that the matrices $\psi_{l,k}$ and $\bar{\psi}_{l,k}$ are
STMs of systems
(\ref{2019finite-system4}) and (\ref{2019finite-system5}), respectively. Then the
following relation always holds:
\begin{align}\label{2019finite-gfvccc}
\psi'_{l,k}(I_{4^{l-k}}\otimes \bar{R}_l)\psi_{l,k}=\bar
{R}_k^{\frac{1}{2}}\bar{\psi}'_{l,k}\bar{\psi}_{l,k}\bar
{R}_k^{\frac{1}{2}}.
\end{align}
\end{lemma}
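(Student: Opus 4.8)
The plan is to prove Lemma~\ref{2019finite-Lem3} by induction on $l-k$, exploiting the recursive backward definition of $\psi_{l,k}$ and $\bar{\psi}_{l,k}$, and using the block-diagonal structure $\bar{R}_k=\mathrm{diag}(R_k,R_k)$ together with the relations $\bar{\mathcal{A}}_{1,k}=\bar{R}_{k+1}^{1/2}\mathcal{A}_{1,k}\bar{R}_k^{-1/2}$ and the analogous ones for $\mathcal{A}_{2,k}$, $\mathcal{C}_1$, $\mathcal{C}_{2,k}$. The base case $l=k$ is immediate since $\psi_{k,k}=\bar{\psi}_{k,k}=I_{2n}$, so both sides reduce to $\bar{R}_k$. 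For the inductive step I would fix $k$ and induct on $l$, assuming the identity holds for $\psi_{l,k+1}$ and $\bar{\psi}_{l,k+1}$, namely $\psi'_{l,k+1}(I_{4^{l-k-1}}\otimes\bar{R}_l)\psi_{l,k+1}=\bar{R}_{k+1}^{1/2}\bar{\psi}'_{l,k+1}\bar{\psi}_{l,k+1}\bar{R}_{k+1}^{1/2}$, and then incorporate one more step of the backward recursion.

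The key computation is to expand the left-hand side using the column-block definition of $\psi_{l,k}$. Writing $\psi_{l,k}$ as the vertical stack of the four blocks $\psi_{l,k+1}(\sqrt{\bar\alpha}\mathcal{A}_{1,k}+\sqrt{\bar\alpha}\mathcal{A}_{2,k})$, $\psi_{l,k+1}\sqrt{1-\bar\alpha}\mathcal{A}_{1,k}$, $\psi_{l,k+1}(\sqrt{\bar\alpha}\mathcal{C}_1+\sqrt{\bar\alpha}\mathcal{C}_{2,k})$, $\psi_{l,k+1}\sqrt{1-\bar\alpha}\mathcal{C}_1$, the product $\psi'_{l,k}(I_{4^{l-k}}\otimes\bar{R}_l)\psi_{l,k}$ becomes a sum of four terms, one per block, each of the form (coefficient matrix)$'\,\psi'_{l,k+1}(I_{4^{l-k-1}}\otimes\bar{R}_l)\psi_{l,k+1}\,$(coefficient matrix); here I expect to use that the Kronecker factor $I_{4^{l-k}}=I_4\otimes I_{4^{l-k-1}}$ splits across the four stacked blocks so that each diagonal block sees $I_{4^{l-k-1}}\otimes\bar{R}_l$. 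Applying the inductive hypothesis to the inner factor $\psi'_{l,k+1}(I_{4^{l-k-1}}\otimes\bar{R}_l)\psi_{l,k+1}=\bar{R}_{k+1}^{1/2}\bar{\psi}'_{l,k+1}\bar{\psi}_{l,k+1}\bar{R}_{k+1}^{1/2}$ converts each term into an expression in which the factors $\bar{R}_{k+1}^{1/2}\mathcal{A}_{1,k}$, $\bar{R}_{k+1}^{1/2}\mathcal{A}_{2,k}$, etc., appear, and by the defining relations these equal $\bar{\mathcal{A}}_{1,k}\bar{R}_k^{1/2}$, $\bar{\mathcal{A}}_{2,k}\bar{R}_k^{1/2}$, and so on. The common factor $\bar{R}_k^{1/2}$ can then be pulled out on both sides, leaving exactly the four-block sum that reassembles into $\bar{R}_k^{1/2}\bar{\psi}'_{l,k}\bar{\psi}_{l,k}\bar{R}_k^{1/2}$ via the corresponding backward recursion for $\bar{\psi}_{l,k}$.

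The main obstacle I anticipate is bookkeeping the Kronecker-product dimensions correctly: one must verify that $I_{4^{l-k}}\otimes\bar{R}_l$ acts block-diagonally on the four stacked components of $\psi_{l,k}$, each of which carries an index set of size $4^{l-k-1}$, so that the identity-factor truly restricts to $I_{4^{l-k-1}}\otimes\bar{R}_l$ on each block. This is where Lemma~\ref{2019finite-Lem2} (the mixed-product property $(F\otimes G)(H\otimes M)=(FH)\otimes(GM)$) is the right tool, allowing the Kronecker factors to be regrouped cleanly. A secondary technical point is confirming that the recursion for $\bar{\psi}_{l,k}$ is genuinely the telescoped conjugate of that for $\psi_{l,k}$; since Lemma~\ref{2019finite-Lem3} is stated only for $\psi$ (the backward STM) and its barred counterpart, I would carry out the induction entirely in the backward index $k$, which matches the recursive slot in which $\psi_{l,k}$ is defined, so no appeal to the forward expression $\varphi$ is needed.
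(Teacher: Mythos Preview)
Your proposal is correct and follows essentially the same route as the paper: induction on $l-k$ via the backward recursion for $\psi_{l,k}$ and $\bar{\psi}_{l,k}$, using the inductive hypothesis at $(l,k+1)$ together with the conjugation identities $\bar{\mathcal{A}}_{1,k}=\bar{R}_{k+1}^{1/2}\mathcal{A}_{1,k}\bar{R}_k^{-1/2}$ (and analogues) to pass between the barred and unbarred coefficient matrices. The paper carries out the same computation starting from the right-hand side rather than the left, but the content is identical; your one verbal slip is saying ``fix $k$ and induct on $l$'' when in fact you keep $l$ fixed and step $k\mapsto k-1$ (equivalently, induct on $l-k$), as your stated hypothesis and your final paragraph make clear.
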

{\textbf {Proof.}} The proof can be found in APPENDIX.

\begin{theorem}\label{2019finite-th1}
For an integer $T>0$, two positive scalars $\epsilon_1$ and $\epsilon_2$ with
$0<\epsilon_1\le \epsilon_2$, and  a sequence of positive definite symmetric matrices $\{R_k\}_{k\in{\mathcal N}_{T}}$, the
following conditions are equivalent:
\begin{description}
  \item[(a)] LDMFS system (\ref{2019finite-system1}) is finite-time stabilizable with respect to $(\epsilon_1, \epsilon_2, T, \{R_k\}_{k\in{\mathcal N}_{T}})$.
  \item[(b)]
   \begin{equation} \label{2019finite-as1}
\phi_{k,0}'(I_{4^k}\otimes \bar
{R}_k)\phi_{k,0}\leq\frac{\epsilon_2}{\epsilon_1}\bar{R}_0, \ \ \
\forall k\in {\mathcal N}_T.
\end{equation}

\item[(c)]

\begin{equation}\label{2019finite-as3}
\bar{\phi}_{k,0}'\bar{\phi}_{k,0}\leq\frac{\epsilon_2}{\epsilon_1}I_{2n},\ \ \
\forall k\in {\mathcal N}_T.
\end{equation}

\item[(d)]

   \begin{equation}\label{2019finite-as4}
\psi_{k,0}'(I_{4^k}\otimes \bar{R}_k)\psi_{k,0}\leq\frac{\epsilon_2}{\epsilon_1}\bar{R}_0, \ \ \
\forall k\in {\mathcal N}_T.
\end{equation}

\item[(e)]

\begin{equation}\label{2019finite-as5}
\bar{\psi}_{k,0}'\bar{\psi}_{k,0}\leq\frac{\epsilon_2}{\epsilon_1}I_{2n},\ \ \
\forall k\in {\mathcal N}_T.
\end{equation}

  \item[(f)] There are  symmetric  matrices  $P_k, k\in \mathcal {N}_T, $ such that the
following constrained difference equation holds:
\begin{eqnarray*}
\begin{cases}
P_{0}= \bar{R}_0^{-1}, \\
P_{k+1}=\left[\begin{array}{cccccc}
I_{4^{k}}\otimes (\sqrt{\bar{\alpha}}\mathcal {A}_{1,k}+\sqrt{\bar{\alpha}}\mathcal {A}_{2,k})\\
I_{4^{k}}\otimes \sqrt{1-\bar{\alpha}}\mathcal {A}_{1,k}\\
I_{4^{k}}\otimes (\sqrt{\bar{\alpha}}\mathcal {C}_{1}+\sqrt{\bar{\alpha}}\mathcal {C}_{2,k})\\
I_{4^{k}}\otimes \sqrt{1-\bar{\alpha}}\mathcal {C}_{1}
\end{array}\right]P_k\\
\ \ \ \ \ \ \ \ \ \ \ \left[\begin{array}{cccccc}
I_{4^{k}}\otimes (\sqrt{\bar{\alpha}}\mathcal {A}_{1,k}+\sqrt{\bar{\alpha}}\mathcal {A}_{2,k})\\
I_{4^{k}}\otimes \sqrt{1-\bar{\alpha}}\mathcal {A}_{1,k}\\
I_{4^{k}}\otimes (\sqrt{\bar{\alpha}}\mathcal {C}_{1}+\sqrt{\bar{\alpha}}\mathcal {C}_{2,k})\\
I_{4^{k}}\otimes \sqrt{1-\bar{\alpha}}\mathcal {C}_{1}
\end{array}\right]',\\
P_k\leq\frac{\epsilon_2}{\epsilon_1}(I_{4^k}\otimes \bar{R}_k^{-1}),\ k\in
{\mathcal N}_T.
\end{cases}
\end{eqnarray*}
\end{description}
\end{theorem}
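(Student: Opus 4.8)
The plan is to use (a) as a hub: I will prove (a)\,$\Leftrightarrow$\,(c) and (a)\,$\Leftrightarrow$\,(e) analytically, and then reduce (b)\,$\Leftrightarrow$\,(c), (d)\,$\Leftrightarrow$\,(e) and (b)\,$\Leftrightarrow$\,(f) to algebraic manipulations of the transition matrices. The one identity I record at the outset is the second-moment bookkeeping $\mathcal{E}(x_k'R_kx_k)=\mathcal{E}(\tilde x_k'\bar R_k\tilde x_k)=\mathcal{E}\|\bar x_k\|^2$, which holds because $\bar R_k=diag(R_k,R_k)$ and $\bar x_k=\bar R_k^{1/2}\tilde x_k$, by the same block computation as the one giving $\mathcal{E}\|\tilde x_k\|^2=\mathcal{E}\|x_k\|^2$ below \eqref{2019finite-oldsystem4}. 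By Lemma~\ref{2019finite-lemma5}, (a) is equivalent to system \eqref{2019finite-system5} being finite-time stable with respect to $(\epsilon_1,\epsilon_2,T,I_{2n})$, i.e. $\|\bar x_0\|^2\le\epsilon_1\Rightarrow\mathcal{E}\|\bar x_k\|^2\le\epsilon_2$ for every $k\in\mathcal{N}_T$. Applying the second-moment identity of Lemma~\ref{2019finite-lemma2} to system \eqref{2019finite-system5} (whose transition matrices are $\bar\phi,\bar\psi$ by Lemma~\ref{2019finite-lemma5}) with deterministic initial data gives $\mathcal{E}\|\bar x_k\|^2=\bar x_0'\bar\phi_{k,0}'\bar\phi_{k,0}\bar x_0$. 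Since $\sup_{\|\bar x_0\|^2\le\epsilon_1}\bar x_0'\bar\phi_{k,0}'\bar\phi_{k,0}\bar x_0=\epsilon_1\lambda_{\max}(\bar\phi_{k,0}'\bar\phi_{k,0})$, the stability requirement is precisely $\lambda_{\max}(\bar\phi_{k,0}'\bar\phi_{k,0})\le\epsilon_2/\epsilon_1$, i.e. condition (c). Using instead the $\bar\psi$-form of Lemma~\ref{2019finite-lemma2} yields (a)\,$\Leftrightarrow$\,(e) verbatim.

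Next I reduce the two remaining matrix inequalities to the ones already settled. Specializing Lemma~\ref{2019finite-Lem1} to lower index $0$ and upper index $k$ gives $\phi_{k,0}'(I_{4^k}\otimes\bar R_k)\phi_{k,0}=\bar R_0^{1/2}\bar\phi_{k,0}'\bar\phi_{k,0}\bar R_0^{1/2}$; since $\bar R_0^{1/2}$ is invertible, pre- and post-multiplying (b) by $\bar R_0^{-1/2}$ turns it exactly into (c), so (b)\,$\Leftrightarrow$\,(c). In the same way Lemma~\ref{2019finite-Lem3} gives (d)\,$\Leftrightarrow$\,(e). Combined with (a)\,$\Leftrightarrow$\,(c) and (a)\,$\Leftrightarrow$\,(e) from the first paragraph, this closes the loop among (a)--(e).

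It remains to incorporate (f). Reading the recursion \eqref{2019finite-as12} at $l=k+1$ with base index $0$ gives $\phi_{k+1,0}=\Phi_k\phi_{k,0}$, where $\Phi_k$ is exactly the block column appearing in (f). Consequently $P_k:=\phi_{k,0}\bar R_0^{-1}\phi_{k,0}'$ satisfies $P_0=\bar R_0^{-1}$ and $P_{k+1}=\Phi_kP_k\Phi_k'$, so by uniqueness of the recursion it is the matrix of (f). Writing $\phi=\phi_{k,0}$, $M=I_{4^k}\otimes\bar R_k$, $R=\bar R_0$, $\gamma=\epsilon_2/\epsilon_1$ and $U=M^{1/2}\phi R^{-1/2}$, a congruence by $R^{-1/2}$ shows that (b), i.e. $\phi'M\phi\le\gamma R$, is equivalent to $U'U\le\gamma I_{2n}$, while a congruence by $M^{1/2}$ shows that the constraint $P_k\le\gamma(I_{4^k}\otimes\bar R_k^{-1})$ of (f), i.e. $\phi R^{-1}\phi'\le\gamma M^{-1}$, is equivalent to $UU'\le\gamma I_{4^k\cdot 2n}$. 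Because $U'U$ and $UU'$ share the same nonzero spectrum, both are equivalent to $\sigma_{\max}(U)^2\le\gamma$, hence (b)\,$\Leftrightarrow$\,(f), completing the set of equivalences.

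The main obstacle is the analytic core (a)\,$\Leftrightarrow$\,(c): it is not an algebraic identity but rests on two conversions, namely Lemma~\ref{2019finite-lemma5}, which trades the mean-field stabilizability of \eqref{2019finite-system1} for finite-time stability of the ordinary stochastic system \eqref{2019finite-system5}, and Lemma~\ref{2019finite-lemma2}, which rewrites the mean-square norm as the deterministic quadratic form $\bar x_0'\bar\phi_{k,0}'\bar\phi_{k,0}\bar x_0$ so that the ``for all admissible $\bar x_0$'' quantifier collapses, by homogeneity, to a single largest-eigenvalue bound. The delicate point is matching the quantifier in Definition~\ref{def1} --- where the augmented initial state $\tilde x_0$ has a vanishing second block --- to the full $2n$-dimensional inequalities (b)--(f); this matching is what Lemma~\ref{2019finite-lemma5} must supply, and I would check it carefully there. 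A lesser but genuine subtlety is the last step, where (b)\,$\Leftrightarrow$\,(f) hinges on the elementary fact that $U'U\le\gamma I$ and $UU'\le\gamma I$ are equivalent.
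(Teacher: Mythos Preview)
Your proof is correct and follows essentially the same route as the paper: both reduce (a) to the transformed system via Lemma~\ref{2019finite-lemma5}, extract the quadratic-form bound from Lemma~\ref{2019finite-lemma2}, and pass between (b)/(c) and (d)/(e) through Lemmas~\ref{2019finite-Lem1} and~\ref{2019finite-Lem3}; the paper merely runs the hub through (b) first rather than (c). The one genuine difference is in (b)\,$\Leftrightarrow$\,(f): the paper applies Schur's complement to swap $\phi'M\phi\le\gamma R$ into $M^{1/2}\phi R^{-1}\phi'M^{1/2}\le\gamma I$ and then congruences down to $P_k\le\gamma(I_{4^k}\otimes\bar R_k^{-1})$, whereas you go straight to the equivalence of $U'U\le\gamma I$ and $UU'\le\gamma I$ via the shared nonzero spectrum of $U'U$ and $UU'$---a cleaner shortcut that avoids the Schur detour. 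Your closing caveat about the quantifier mismatch (the augmented $\tilde x_0$ is constrained to have vanishing second block, yet (b)--(f) are full $2n$-dimensional inequalities) is well placed; the paper's own (a)\,$\Rightarrow$\,(b) contradiction step selects an unrestricted $\tilde x_0$ and thus leans on the same point in Lemma~\ref{2019finite-lemma5} without making it explicit.
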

{\textbf {Proof.}} The proof can be found in APPENDIX.

\begin{remark}
The necessary and sufficient conditions for finite-time
stabilizability  of LDMFS system  (\ref{2019finite-system1}) are
 presented in Theorem \ref{2019finite-th1}. When $u_k^F=0$
for $k\in {\mathcal N}_{T-1}$, then  necessary and sufficient
conditions for finite-time stability  of  the  following  unforced
system
$$
\begin{cases}
x_{k+1}=A_1x_k+A_2\mathcal{E}x_k\\
\ \ \ \ \ \ \ \ \ \ +(C_1x_k+C_2\mathcal {E}x_k)w_k,\\
x_0=\xi\in {\mathcal R}^n, \ k\in {\mathcal N}_{T-1},
\end{cases}
$$
are given.  When  system (\ref{2019finite-system1}) degenerates into
a standard linear discrete stochastic system without mean-field
terms, similar  results first appeared in \cite{ziji-finite}. A main
difficulty  to give necessary and sufficient conditions for
finite-time stability and stabilizability of LDMFS systems exists in
that it is not easy to  obtain  the STMs as seen above, which
differs  from linear deterministic systems
\cite{Amato2,Amato3,Amato}. In \cite{Amato2,Amato3,Amato}, necessary
and sufficient conditions have been given for finite-time stability
of  linear deterministic systems  based on  the  STM.
\end{remark}

\section{Construction of Lyapunov function based on STMs}

In Theorem \ref{2019finite-th1}, five criteria are given through
STMs. These criteria are all necessary and sufficient conditions for
finite-time stabilization, and the first four criteria are
relatively simple in form. However, solving these inequalities in
Theorem \ref{2019finite-th1} is not easy when $T$ is large enough.
For example, when using {\rm (f)} in Theorem \ref{2019finite-th1} to
verify the finite-time stabilization of LDMFS system
(\ref{2019finite-system1}), with the progressive increase  of  $k$,
the order of the solution matrix $P_k$ keeps expanding and is
$2^{2k+1}n\times 2^{2k+1}n$. Next, we will find ways to simplify the
calculation of Theorem \ref{2019finite-th1} and find a novel
Lyapunov-type theorem.

Let  $\Gamma_{r}$ denote the set of block matrices composed of ~$r\times r$ square sub-matrices with the same dimension. For   the block matrix $A$ belongs to $\Gamma_{r}$ with $A_{ij}$ denoting   its sub-matrix,  we introduce an operator  $\text{Tr}(A[A_{ij}])=\sum^r_{i=1}A_{ii}$. As a generalization of the standard  matrix trace, $\text{Tr}$ can be called block trace. It is not difficult to find that $\text{Tr}$ enjoys the following useful properties.

\begin{lemma}\label{2019finite-Lem6}
For any block matrix $A[A_{ij}]_{r\times r}\in\Gamma_{r}$, the following are true:
\begin{itemize}
\item[(i)] $\text{Tr}(A')=\text{Tr}(A)'$.
\item[(ii)] For any $2n$ matrices $C_i$, $D_i$ with appropriate dimension, $i\in\{1,2,\cdots,n\}$, there will always be
    \begin{align*}&\text{Tr}\left(\left(I_r\otimes \left[\begin{array}{cccc}C_1\\\vdots\\C_n\end{array}\right]\right)A\left(I_r\otimes \left[\begin{array}{cccc}D_1\\\vdots\\D_n\end{array}\right]\right)'\right)
    \\=&\sum^n_{i=1}C_i\text{Tr}(A)D_i'.
    \end{align*}
\end{itemize}
\end{lemma}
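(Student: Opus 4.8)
The plan is to establish both properties by elementary block-matrix bookkeeping, working directly from the definition of $\Gamma_r$ and the block trace $\text{Tr}$, together with the block-diagonal form produced by the Kronecker factor $I_r$ (optionally invoking the mixed-product rule of Lemma~\ref{2019finite-Lem2}). Part (i) is almost immediate: transposing a block matrix $A=[A_{ij}]_{r\times r}\in\Gamma_r$ both transposes each sub-block and interchanges block positions, so the $(i,i)$ sub-block of $A'$ equals $(A_{ii})'$. Since ordinary transposition distributes over a finite sum, $\text{Tr}(A')=\sum_{i=1}^{r}(A_{ii})'=\bigl(\sum_{i=1}^{r}A_{ii}\bigr)'=\text{Tr}(A)'$.

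For part (ii), I would first record that $I_r\otimes\mathbf{C}$ is block diagonal with the column stack $\mathbf{C}=[C_1;\dots;C_n]$ repeated $r$ times on the diagonal (and similarly for $I_r\otimes\mathbf{D}$), so that its coarse $(a,b)$ block is $\delta_{ab}\mathbf{C}$. Carrying out the block multiplication in $M:=(I_r\otimes\mathbf{C})\,A\,(I_r\otimes\mathbf{D})'$ then shows that the coarse $(a,b)$ block of $M$ is $\mathbf{C}A_{ab}\mathbf{D}'$; refining by the stacking structure of $\mathbf{C}$ and $\mathbf{D}$, the $((a,i),(b,j))$ sub-block of $M$ is $C_i A_{ab} D_j'$. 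Forming the block trace, i.e.\ summing the diagonal sub-blocks ($a=b$ and $i=j$), collapses the outer index against $\text{Tr}(A)=\sum_a A_{aa}$ and leaves only the matched stack indices:
\begin{align*}
\text{Tr}(M)=\sum_{a=1}^{r}\sum_{i=1}^{n}C_i A_{aa} D_i'=\sum_{i=1}^{n}C_i\Bigl(\sum_{a=1}^{r}A_{aa}\Bigr)D_i'=\sum_{i=1}^{n}C_i\,\text{Tr}(A)\,D_i',
\end{align*}
which is the desired identity.

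The step demanding the most care is the bookkeeping of the two nested levels of blocking --- the outer $r$-fold structure inherited from $I_r$ and the inner $n$-fold structure inherited from the stacks $\mathbf{C},\mathbf{D}$ --- and, relatedly, being explicit about which level the operator $\text{Tr}$ acts on so that it selects exactly the sub-blocks with $a=b$ and $i=j$. I would make the double index $((a,i),(b,j))$ explicit to avoid conflating $\mathbf{C}\,\text{Tr}(A)\,\mathbf{D}'$ (the coarse block trace) with the fully reduced sum $\sum_i C_i\,\text{Tr}(A)\,D_i'$. The only remaining routine check is conformability: each $C_i$ and $D_i$ must share the column dimension of the sub-blocks of $A$ so that every product $C_i\,\text{Tr}(A)\,D_i'$ is well defined and the terms are summable.
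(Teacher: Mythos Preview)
Your proposal is correct and follows essentially the same approach as the paper's proof: both arguments identify the sub-blocks of $(I_r\otimes\mathbf{C})A(I_r\otimes\mathbf{D})'$ as $C_iA_{ab}D_j'$ and then sum along the diagonal. The only cosmetic difference is that the paper encodes the double index $((a,i),(b,j))$ through a single flat index together with floor and ceiling functions, whereas you keep the two levels of blocking explicit; your bookkeeping is arguably cleaner, but the substance is identical.
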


\begin{proof}
(i) is obvious, so we only need to show  (ii). Without loss of generality,
set
$$\Theta[\Theta_{ij}]=\left(I_r\otimes \left[\begin{array}{cccc}C_1\\\vdots\\C_n\end{array}\right]\right)A\left(I_r\otimes \left[\begin{array}{cccc}D_1\\\vdots\\D_n\end{array}\right]\right)',$$
then $\Theta[\Theta_{ij}]$ is a block matrix with $1\le i,j\leq r\times n$.
$\lfloor\cdot\rfloor$ stands for the floor function, i.e., $\lfloor\alpha\rfloor=\{\max \beta\in N | \beta\leq\alpha\}$. Meanwhile, $\lceil\cdot\rceil$ is the ceil function, i.e., $\lceil\alpha\rceil=\{\max \beta\in N | \beta\geq\alpha\}$.
When $\nu_i=i-\lfloor\frac{i}{n}\rfloor\times n$ and $\mu_s=\lceil\frac{s}{n}\rceil$,
we have $\Theta_{ij}=C_{I_{\{\nu_i=0\}}\times r+\nu_i}A_{\mu_i\mu_j}D'_{I_{\{\nu_j=0\}}\times r+\nu_j}$.
Therefore,
\begin{align*}
\text{Tr}(\Theta[\Theta_{ij}])=\sum^{r\times n}_{i=1}D_{ii}=\sum^{r}_{i=1}\sum^{n}_{j=1}C_jA_{ii}D_j'=\sum^n_{j=1}C_j\text{Tr}(A)D_j'.
\end{align*}
The proof is completed.
\end{proof}

\begin{remark}
The properties of $\text{Tr}$ and the standard matrix trace $\text{tr}$ are not completely consistent, such as commutativity.
Generally speaking, $\text{Tr}(AB)=\text{Tr}(BA)$ does not hold.
\end{remark}

Based on Lemma \ref{2019finite-lemma2d} and Theorem \ref{2019finite-th1},
$\bar{\varphi}_{k,0}'\bar{\varphi}_{k,0}=\bar{\psi}_{k,0}'\bar{\psi}_{k,0}\leq\frac{\epsilon_2}{\epsilon_1}I_{2n}$ is a
necessary and sufficient  condition for finite-time stabilization, where
\begin{equation*}
\bar{\varphi}_{k,0}=\left(I_{4^{k-1}}\otimes \left[\begin{array}{cccc}
\sqrt{\bar{\alpha}}\bar{\mathcal {A}}_{1,k-1}+\sqrt{\bar{\alpha}}\bar{\mathcal {A}}_{2,k-1}\\
\sqrt{1-\bar{\alpha}}\bar{\mathcal {A}}_{1,k-1}\\
\sqrt{\bar{\alpha}}\bar{\mathcal {C}}_{1,k-1}+\sqrt{\bar{\alpha}}\bar{\mathcal {C}}_{2,k-1}\\
\sqrt{1-\bar{\alpha}}\bar{\mathcal {C}}_{1,k-1}
\end{array}
\right]\right)\bar{\varphi}_{k-1,0}.
\end{equation*}
Note that $eig_{max}(\bar{\varphi}_{k,0}'\bar{\varphi}_{k,0})=eig_{max}(\text{Tr}(\bar{\varphi}_{k,0}\bar{\varphi}_{k,0}'))$, where $eig_{max}$ means the maximum eigenvalue and $\bar{\varphi}_{k,0}\bar{\varphi}_{k,0}'$ belongs to $\Gamma_{k}$ and each sub-matrix in $\bar{\varphi}_{k,0}\bar{\varphi}_{k,0}'$ belongs to $\mathcal {R}^{2n\times 2n}$.
Set $\bar{P}_k=\text{Tr}(\bar{\varphi}_{k,0}\bar{\varphi}_{k,0}')$.  So we can get that
\begin{align*}
&\bar{P}_{k+1}\\
=&\text{Tr}(\bar{\varphi}_{k+1,0}\bar{\varphi}_{k+1,0}')\\
=&\text{Tr}\left(\left(I_{4^{k}}\otimes \left[\begin{array}{cccc}
\sqrt{\bar{\alpha}}\bar{\mathcal {A}}_{1,k}+\sqrt{\bar{\alpha}}\bar{\mathcal {A}}_{2,k}\\
\sqrt{1-\bar{\alpha}}\bar{\mathcal {A}}_{1,k}\\
\sqrt{\bar{\alpha}}\bar{\mathcal {C}}_{1,k}+\sqrt{\bar{\alpha}}\bar{\mathcal {C}}_{2,k}\\
\sqrt{1-\bar{\alpha}}\bar{\mathcal {C}}_{1,k}
\end{array}\right.
\right]\right)\\
& \left.\bar{\varphi}_{k,0}\bar{\varphi}_{k,0}'\left(I_{4^{k}}\otimes \left[\begin{array}{cccc}
\sqrt{\bar{\alpha}}\bar{\mathcal {A}}_{1,k}+\sqrt{\bar{\alpha}}\bar{\mathcal {A}}_{2,k}\\
\sqrt{1-\bar{\alpha}}\bar{\mathcal {A}}_{1,k}\\
\sqrt{\bar{\alpha}}\bar{\mathcal {C}}_{1,k}+\sqrt{\bar{\alpha}}\bar{\mathcal {C}}_{2,k}\\
\sqrt{1-\bar{\alpha}}\bar{\mathcal {C}}_{1,k}
\end{array}
\right]\right)'\right)
\end{align*}
From Lemma \ref{2019finite-Lem6}, the above equation means that
\begin{align*}
&\bar{P}_{k+1}\\
=&\text{Tr}\left(\left[\begin{array}{cccc}
\sqrt{\bar{\alpha}}\bar{\mathcal {A}}_{1,k}+\sqrt{\bar{\alpha}}\bar{\mathcal {A}}_{2,k}\\
\sqrt{1-\bar{\alpha}}\bar{\mathcal {A}}_{1,k}\\
\sqrt{\bar{\alpha}}\bar{\mathcal {C}}_{1,k}+\sqrt{\bar{\alpha}}\bar{\mathcal {C}}_{2,k}\\
\sqrt{1-\bar{\alpha}}\bar{\mathcal {C}}_{1,k}
\end{array}
\right]\text{Tr}(\bar{\varphi}_{k,0}\bar{\varphi}_{k,0}')\right.\\
&\left.\left[\begin{array}{cccc}
\sqrt{\bar{\alpha}}\bar{\mathcal {A}}_{1,k}+\sqrt{\bar{\alpha}}\bar{\mathcal {A}}_{2,k}\\
\sqrt{1-\bar{\alpha}}\bar{\mathcal {A}}_{1,k}\\
\sqrt{\bar{\alpha}}\bar{\mathcal {C}}_{1,k}+\sqrt{\bar{\alpha}}\bar{\mathcal {C}}_{2,k}\\
\sqrt{1-\bar{\alpha}}\bar{\mathcal {C}}_{1,k}
\end{array}
\right]'\right)\\
=&\bar{\mathcal {A}}_{1,k}\bar{P}_k\bar{\mathcal {A}}_{1,k}'+\bar{\alpha}\bar{\mathcal {A}}_{1,k}\bar{P}_k\bar{\mathcal {A}}_{2,k}'+\bar{\alpha}\bar{\mathcal {A}}_{2,k}\bar{P}_k\bar{\mathcal {A}}_{1,k}'\\
&+\bar{\alpha}\bar{\mathcal {A}}_{2,k}\bar{P}_k\bar{\mathcal {A}}_{2,k}'+\bar{\mathcal {C}}_{1,k}\bar{P}_k\bar{\mathcal {C}}_{1,k}'+\bar{\alpha}\bar{\mathcal {C}}_{1,k}\bar{P}_k\bar{\mathcal {C}}_{2,k}'\\
&+\bar{\alpha}\bar{\mathcal {C}}_{2,k}\bar{P}_k\bar{\mathcal
{C}}_{1,k}'+\bar{\alpha}\bar{\mathcal {C}}_{2,k}\bar{P}_k\bar{\mathcal
{C}}_{2,k}'.
\end{align*}
To sum up the above discussion, the following theorem can  be   easily  obtained.
\begin{theorem}\label{2019finite-th5}
LDMFS system (\ref{2019finite-system1}) is finite-time stabilizable
with respect to $(\epsilon_1, \epsilon_2, T, \{R_k\}_{k\in{\mathcal
N}_{T}})$ iff there are  symmetric positive definite matrices $\{P_k\}_{k\in \mathcal {N}_T}$ satisfying the following constrained  Lyapunov-type
equation {\small\begin{align}\label{2019finite-Ly1}
\begin{cases}
\bar{\mathcal {A}}_{1,k}\bar{P}_k\bar{\mathcal {A}}_{1,k}'+\bar{\alpha}\bar{\mathcal {A}}_{1,k}\bar{P}_k\bar{\mathcal {A}}_{2,k}'+\bar{\alpha}\bar{\mathcal {A}}_{2,k}\bar{P}_k\bar{\mathcal {A}}_{1,k}'+\bar{\alpha}\bar{\mathcal {A}}_{2,k}\bar{P}_k\bar{\mathcal {A}}_{2,k}'\\
+\bar{\mathcal {C}}_{1,k}\bar{P}_k\bar{\mathcal {C}}_{1,k}'+\bar{\alpha}\bar{\mathcal {C}}_{1,k}\bar{P}_k\bar{\mathcal {C}}_{2,k}'+\bar{\alpha}\bar{\mathcal {C}}_{2,k}\bar{P}_k\bar{\mathcal
{C}}_{1,k}'+\bar{\alpha}\bar{\mathcal {C}}_{2,k}\bar{P}_k\bar{\mathcal
{C}}_{2,k}'\\
=\bar{P}_{k+1},\\
\bar{P}_0=I_{2n},\\
\bar{P}_k\leq\frac{\epsilon_2}{\epsilon_1}I_{2n}.
\end{cases}
\end{align}}
\end{theorem}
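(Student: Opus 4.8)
The plan is to derive the Lyapunov-type equivalence directly from condition~(e) of Theorem~\ref{2019finite-th1}, converting the matrix inequality on the state transition matrix into a scalar eigenvalue bound and then transporting that bound onto $\bar{P}_k$. First I would invoke the equivalence of (a) and (e) in Theorem~\ref{2019finite-th1} together with the relation $\bar{\varphi}_{k,0}'\bar{\varphi}_{k,0}=\bar{\psi}_{k,0}'\bar{\psi}_{k,0}$ coming from Lemma~\ref{2019finite-lemma2d}, so that system~(\ref{2019finite-system1}) is finite-time stabilizable with respect to $(\epsilon_1,\epsilon_2,T,\{R_k\}_{k\in\mathcal{N}_T})$ iff $\bar{\varphi}_{k,0}'\bar{\varphi}_{k,0}\le\frac{\epsilon_2}{\epsilon_1}I_{2n}$ for all $k\in\mathcal{N}_T$. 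Because $\bar{\varphi}_{k,0}'\bar{\varphi}_{k,0}$ is symmetric and positive semidefinite, this is the same as the scalar condition $eig_{max}(\bar{\varphi}_{k,0}'\bar{\varphi}_{k,0})\le\frac{\epsilon_2}{\epsilon_1}$.

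Next I would move to $\bar{P}_k=\text{Tr}(\bar{\varphi}_{k,0}\bar{\varphi}_{k,0}')$. Using the spectral identity $eig_{max}(\bar{\varphi}_{k,0}'\bar{\varphi}_{k,0})=eig_{max}(\bar{P}_k)$ established before the statement, the scalar bound rewrites as $eig_{max}(\bar{P}_k)\le\frac{\epsilon_2}{\epsilon_1}$, i.e.\ $\bar{P}_k\le\frac{\epsilon_2}{\epsilon_1}I_{2n}$. It then suffices to confirm that $\bar{P}_k$ is precisely the sequence generated by~(\ref{2019finite-Ly1}). The initial value is immediate since $\bar{\varphi}_{0,0}=I_{2n}$ gives $\bar{P}_0=\text{Tr}(I_{2n})=I_{2n}$. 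For the update, I would substitute the forward recursion $\bar{\varphi}_{k+1,0}=(I_{4^k}\otimes\mathcal{B}_k)\bar{\varphi}_{k,0}$, whose coefficient stack $\mathcal{B}_k$ has the four blocks $B_1=\sqrt{\bar{\alpha}}(\bar{\mathcal{A}}_{1,k}+\bar{\mathcal{A}}_{2,k})$, $B_2=\sqrt{1-\bar{\alpha}}\bar{\mathcal{A}}_{1,k}$, $B_3=\sqrt{\bar{\alpha}}(\bar{\mathcal{C}}_{1,k}+\bar{\mathcal{C}}_{2,k})$ and $B_4=\sqrt{1-\bar{\alpha}}\bar{\mathcal{C}}_{1,k}$, and then apply property~(ii) of Lemma~\ref{2019finite-Lem6}. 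That property collapses the $4^k$-fold block sum and yields $\bar{P}_{k+1}=\sum_{j=1}^{4}B_j\bar{P}_kB_j'$; expanding the four products reproduces exactly the right-hand side of~(\ref{2019finite-Ly1}). Since the recursion together with $\bar{P}_0=I_{2n}$ determines the sequence uniquely, any symmetric solution of~(\ref{2019finite-Ly1}) must equal $\text{Tr}(\bar{\varphi}_{k,0}\bar{\varphi}_{k,0}')$, which closes both directions of the equivalence.

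Finally I would dispose of positive definiteness as a minor point: $\bar{P}_0=I_{2n}>0$ and each map $X\mapsto\sum_j B_jXB_j'$ preserves positive semidefiniteness, so $\bar{P}_k\ge0$, with strict definiteness whenever the diagonal block $\sqrt{1-\bar{\alpha}}\bar{\mathcal{A}}_{1,k}$ is nonsingular. The step I expect to be the main obstacle is the spectral identity $eig_{max}(\bar{\varphi}_{k,0}'\bar{\varphi}_{k,0})=eig_{max}(\text{Tr}(\bar{\varphi}_{k,0}\bar{\varphi}_{k,0}'))$: it equates the top eigenvalue of the Gram matrix $\bar{\varphi}_{k,0}'\bar{\varphi}_{k,0}$ with that of the forward block trace, and this is not a generic matrix fact, since it is not implied by the equality of their ordinary traces alone. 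I would therefore treat this identity as the heart of the argument and justify it carefully through the block-trace calculus of Lemma~\ref{2019finite-Lem6} and the structured form of the coefficient blocks, after which every remaining step is routine substitution.
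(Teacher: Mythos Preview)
Your proposal follows the paper's argument essentially step for step: the paper likewise starts from condition~(e) of Theorem~\ref{2019finite-th1} together with Lemma~\ref{2019finite-lemma2d}, asserts the identity $eig_{max}(\bar{\varphi}_{k,0}'\bar{\varphi}_{k,0})=eig_{max}(\text{Tr}(\bar{\varphi}_{k,0}\bar{\varphi}_{k,0}'))$, sets $\bar{P}_k=\text{Tr}(\bar{\varphi}_{k,0}\bar{\varphi}_{k,0}')$, and then obtains the recursion~(\ref{2019finite-Ly1}) via Lemma~\ref{2019finite-Lem6}(ii). The spectral identity you flag as the crux is simply stated without proof in the paper, so your intention to justify it separately already goes beyond what the paper provides.
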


\begin{remark}
Theorem \ref{2019finite-th5} provides a more convenient way to determine the finite-time stabilization than directly calculating STMs. (\ref{2019finite-Ly1}) can be regarded as a non-fragile mean-field stochastic version of general Lyapunov equation about finite time stabilization. When the system (\ref{2019finite-system1}) degenerates into a classical deterministic system, (\ref{2019finite-Ly1}) reduces to the corresponding results in \cite{Amato}.
This necessary and sufficient Lyapunov-type theorem is able to improve many existing works.
To the best of the authors' knowledge, this result does not exist  even for standard linear discrete stochastic system  $x_{k+1}=A_kx_k+C_kx_kw_k$.
However, Theorem \ref{2019finite-th5} requires $\bar{P}_{k}$ at each step $k$ calculated by an iterative equation.
At present, it is not convenient to design  the non-fragile controller.
Therefore, we try to change the result into  the form of Lyapunov-type inequality easily solved by LMI technique.
\end{remark}

\begin{theorem}\label{2019finite-th6}
LDMFS system (\ref{2019finite-system1}) is finite-time stabilizable
with respect to $(\epsilon_1, \epsilon_2, T, \{R_k\}_{k\in{\mathcal
N}_{T}})$ iff  there are  symmetric positive definite matrices $\{P_k\}_{k\in \mathcal {N}_T}$ satisfying  the following  Lyapunov-type  inequality
\begin{align}\label{2019finite-Ly2}
\begin{cases}
\bar{\mathcal {A}}_{1,k}P_k\bar{\mathcal {A}}_{1,k}'+\bar{\alpha}\bar{\mathcal {A}}_{1,k}P_k\bar{\mathcal {A}}_{2,k}'+\bar{\alpha}\bar{\mathcal {A}}_{2,k}P_k\bar{\mathcal {A}}_{1,k}'\\+\bar{\alpha}\bar{\mathcal {A}}_{2,k}P_k\bar{\mathcal {A}}_{2,k}'
+\bar{\mathcal {C}}_{1,k}P_k\bar{\mathcal {C}}_{1,k}'+\bar{\alpha}\bar{\mathcal {C}}_{1,k}P_k\bar{\mathcal {C}}_{2,k}'\\
+\bar{\alpha}\bar{\mathcal {C}}_{2,k}P_k\bar{\mathcal
{C}}_{1,k}'+\bar{\alpha}\bar{\mathcal {C}}_{2,k}P_k\bar{\mathcal
{C}}_{2,k}'\leq P_{k+1},\\
P_0\geq I_{2n},\\
P_k\leq\frac{\epsilon_2}{\epsilon_1}I_{2n}.
\end{cases}
\end{align}
\end{theorem}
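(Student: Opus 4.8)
The plan is to deduce Theorem~\ref{2019finite-th6} from the exact Lyapunov equation of Theorem~\ref{2019finite-th5} by a monotone comparison argument. The starting point is the observation that both (\ref{2019finite-Ly1}) and (\ref{2019finite-Ly2}) are governed by the same linear operator
$$
\mathcal{L}_k(X) = \bar{\mathcal{A}}_{1,k}X\bar{\mathcal{A}}_{1,k}' + \bar{\alpha}\bar{\mathcal{A}}_{1,k}X\bar{\mathcal{A}}_{2,k}' + \bar{\alpha}\bar{\mathcal{A}}_{2,k}X\bar{\mathcal{A}}_{1,k}' + \bar{\alpha}\bar{\mathcal{A}}_{2,k}X\bar{\mathcal{A}}_{2,k}' + \bar{\mathcal{C}}_{1,k}X\bar{\mathcal{C}}_{1,k}' + \bar{\alpha}\bar{\mathcal{C}}_{1,k}X\bar{\mathcal{C}}_{2,k}' + \bar{\alpha}\bar{\mathcal{C}}_{2,k}X\bar{\mathcal{C}}_{1,k}' + \bar{\alpha}\bar{\mathcal{C}}_{2,k}X\bar{\mathcal{C}}_{2,k}',
$$
so that (\ref{2019finite-Ly1}) reads $\bar{P}_{k+1}=\mathcal{L}_k(\bar{P}_k)$ and (\ref{2019finite-Ly2}) reads $\mathcal{L}_k(P_k)\le P_{k+1}$. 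First I would establish that $\mathcal{L}_k$ is \emph{positive and monotone}: $X\le Y$ implies $\mathcal{L}_k(X)\le \mathcal{L}_k(Y)$. Using that $\alpha_k$ is Bernoulli, so $\mathcal{E}[\alpha_k]=\mathcal{E}[\alpha_k^2]=\bar{\alpha}$, one recognizes
$$
\mathcal{L}_k(X) = \mathcal{E}\!\left[(\bar{\mathcal{A}}_{1,k}+\alpha_k\bar{\mathcal{A}}_{2,k})X(\bar{\mathcal{A}}_{1,k}+\alpha_k\bar{\mathcal{A}}_{2,k})'\right] + \mathcal{E}\!\left[(\bar{\mathcal{C}}_{1,k}+\alpha_k\bar{\mathcal{C}}_{2,k})X(\bar{\mathcal{C}}_{1,k}+\alpha_k\bar{\mathcal{C}}_{2,k})'\right],
$$
which is an average of congruence transformations and hence completely positive; monotonicity follows by applying positivity to $Y-X\ge 0$ together with linearity of $\mathcal{L}_k$.

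Necessity is then immediate. If system (\ref{2019finite-system1}) is finite-time stabilizable, Theorem~\ref{2019finite-th5} furnishes a sequence $\{\bar{P}_k\}$ solving (\ref{2019finite-Ly1}) with $\bar{P}_0=I_{2n}$ and $\bar{P}_k\le(\epsilon_2/\epsilon_1)I_{2n}$. Setting $P_k=\bar{P}_k$ satisfies (\ref{2019finite-Ly2}), because the equality $\mathcal{L}_k(\bar{P}_k)=\bar{P}_{k+1}$ trivially entails $\mathcal{L}_k(P_k)\le P_{k+1}$, while $P_0=I_{2n}\ge I_{2n}$ and $P_k\le(\epsilon_2/\epsilon_1)I_{2n}$ hold automatically. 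Thus the exact solution of the equation is a particular solution of the inequality.

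For sufficiency I would show by induction that the exact solution $\bar{P}_k$ of (\ref{2019finite-Ly1}) is dominated by any solution $P_k$ of (\ref{2019finite-Ly2}). The base case $\bar{P}_0=I_{2n}\le P_0$ follows from $P_0\ge I_{2n}$. Assuming $\bar{P}_k\le P_k$, the monotonicity of $\mathcal{L}_k$ and the inequality in (\ref{2019finite-Ly2}) give
$$
\bar{P}_{k+1}=\mathcal{L}_k(\bar{P}_k)\le \mathcal{L}_k(P_k)\le P_{k+1},
$$
closing the induction. Consequently $\bar{P}_k\le P_k\le(\epsilon_2/\epsilon_1)I_{2n}$ for every $k\in\mathcal{N}_T$, so $\{\bar{P}_k\}$ meets every requirement of Theorem~\ref{2019finite-th5}, which delivers finite-time stabilizability.

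I expect the monotonicity of $\mathcal{L}_k$ to be the only genuinely substantive point; once it is in place, both directions collapse to a one-line boundary comparison and a routine induction. The remaining care is purely bookkeeping: matching the two boundary normalizations (the normalization $\bar{P}_0=I_{2n}$ in the equation versus the relaxed $P_0\ge I_{2n}$ in the inequality) and checking that the common upper bound $(\epsilon_2/\epsilon_1)I_{2n}$ is inherited by the dominated sequence $\{\bar{P}_k\}$.
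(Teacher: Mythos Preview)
Your proof is correct and follows the same high-level strategy as the paper: both reduce to Theorem~\ref{2019finite-th5}, dispatch necessity by taking $P_k=\bar P_k$, and for sufficiency argue that the exact iterates $\bar P_k$ of (\ref{2019finite-Ly1}) inherit the bound $(\epsilon_2/\epsilon_1)I_{2n}$ from the inequality solution $P_k$.

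The execution of the sufficiency step differs. You isolate the Lyapunov operator $\mathcal{L}_k$, prove it is monotone via the Bernoulli identity $\mathcal{E}[\alpha_k]=\mathcal{E}[\alpha_k^2]=\bar\alpha$ (which exhibits $\mathcal{L}_k$ as an expectation of congruences), and then run the clean induction $\bar P_{k+1}=\mathcal{L}_k(\bar P_k)\le\mathcal{L}_k(P_k)\le P_{k+1}$. The paper instead attempts to track a multiplicative factor $H_j$ with $\bar P_j=H_jP_j$ and update it via $H_{j+1}=M_jP_{j+1}^{-1}$; that argument is terse and somewhat opaque (the claimed relation $\bar P_j=I=H_jP_j$ cannot hold literally for $j>0$, and $M_j$ is built from $P_j$ rather than $\bar P_j$), so your monotonicity-based comparison is both more transparent and more robust. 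In particular, your route makes explicit why the cross terms $\bar\alpha\,\bar{\mathcal A}_{1,k}X\bar{\mathcal A}_{2,k}'+\cdots$ cause no trouble, which is exactly the point a reader might worry about.
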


{\textbf {Proof.}}
According to Theorem \ref{2019finite-th5}, we need to prove that the solvability of (\ref{2019finite-Ly1}) and (\ref{2019finite-Ly2}) is equivalent to each other. Through observation, it is not difficult to find that (\ref{2019finite-Ly1}) can definitely deduce (\ref{2019finite-Ly2}) by choosing $P_k=\bar{P}_k$.

Next, let us consider: (\ref{2019finite-Ly2}) $\rightarrow $ (\ref{2019finite-Ly1}). Suppose there are $P_k$ satisfying (\ref{2019finite-Ly2}). Then $0<H_0=P_0^{-1}\leq I$ must exist and $\bar{P}_0=I=H_0P_0$. By induction, it is assumed that there exists symmetric positive definite matrix $H_j$ makes $\bar{P}_j=I=H_jP_j$. Then we need to prove there exits $H_{j+1}$ such that $\bar{P}_{j+1}=I=H_{j+1}P_{j+1}$. From (\ref{2019finite-Ly1}), denote $M_j$ as
\begin{align*}
M_j=&\bar{\mathcal {A}}_{1,k}P_k\bar{\mathcal {A}}_{1,k}'+\bar{\alpha}\bar{\mathcal {A}}_{1,k}P_k\bar{\mathcal {A}}_{2,k}'+\bar{\alpha}\bar{\mathcal {A}}_{2,k}P_k\bar{\mathcal {A}}_{1,k}'\\
&+\bar{\alpha}\bar{\mathcal {A}}_{2,k}P_k\bar{\mathcal {A}}_{2,k}'+\bar{\mathcal {C}}_{1,k}P_k\bar{\mathcal {C}}_{1,k}'+\bar{\alpha}\bar{\mathcal {C}}_{1,k}P_k\bar{\mathcal {C}}_{2,k}'\\
&+\bar{\alpha}\bar{\mathcal {C}}_{2,k}P_k\bar{\mathcal
{C}}_{1,k}'+\bar{\alpha}\bar{\mathcal {C}}_{2,k}P_k\bar{\mathcal
{C}}_{2,k}'.
\end{align*}
So, $H_{j+1}=M_jP_{j+1}^{-1}$. The proof is ended.
$\square$

Next, we are able to  transform the non-fragile finite-time stabilizable controller $u^F_k$ design problem into a feasible solution problem for a set of LMIs based on Schur's  complement.

\begin{theorem}\label{theorem4.3}
LDMFS system (\ref{2019finite-system1}) is finite-time stabilizable with
respect to $(\epsilon_1, \epsilon_2, T, \{R_k\}_{k\in{\mathcal N}_{T}})$ via a non-fragile controller $u^F_k$,
if for a given positive scalar $\gamma>0$, there exist matrices
$K_1$ and $K_2$, positive definite matrices $\{P_k\}_{k\in{\mathcal N}_{T}}$, $\{Q_k\}_{k\in{\mathcal N}_{T-1}}$ solving the following LMIs.
\begin{align}\label{2019finite-LMI1}
&\left[\begin{array}{cccccccccccccc}
-P_{k+1}&\sqrt{\bar{\alpha}}\Pi_1&\sqrt{1-\bar{\alpha}}\Pi_1&\sqrt{\bar{\alpha}}\bar{\mathcal{C}}_{1,k}
&\sqrt{1-\bar{\alpha}}\bar{\mathcal {C}}_{1,k}\\
*&-\frac{\epsilon_1}{\epsilon_2}I_{2n}&0&0&0\\
*&*&-\frac{\epsilon_1}{\epsilon_2}I_{2n}&0&0\\
*&*&*&-\frac{\epsilon_1}{\epsilon_2}I_{2n}&0\\
*&*&*&*&-\frac{\epsilon_1}{\epsilon_2}I_{2n}\\
*&*&*&*&*\\
*&*&*&*&*
\end{array}\right.\nonumber\\
&\left.\begin{array}{ccccccccc}
\Pi_2&0\\
0&\Pi_3\\
0&\Pi_4\\
0&\Pi_5\\
0&0\\
-\gamma I_2\otimes \frac{\epsilon_1}{\epsilon_2}I_{2n}&0\\
*&-I_2\otimes \frac{\epsilon_1}{\epsilon_2}I_{2n}
\end{array}\right]<0,
\end{align}
where $P_0\geq I_{2n}$, $P_k\leq\frac{\epsilon_2}{\epsilon_1}I_{2n}$,
{\small \begin{align*}
\Pi_1=&\bar{R}_{k+1}^{\frac{1}{2}}\left[\begin{array}{ccc}A_1+A_2+B(K_1+K_2)&0\\
0&A_1+BK_1
\end{array}\right]\bar{R}_k^{-\frac{1}{2}},\\
\Pi_2=&\bar{R}_{k+1}^{\frac{1}{2}}\left[\begin{array}{cccc}\bar{\alpha}^{\frac{3}{2}}BM&0\\
(\sqrt{\bar{\alpha}}-\bar{\alpha})BM&\sqrt{\bar{\alpha}}BM
\end{array}\right.\\
&\left.\begin{array}{cccc}\bar{\alpha}\sqrt{1-\bar{\alpha}}BM&0\\
-\bar{\alpha}\sqrt{1-\bar{\alpha}}BM&\bar{\alpha}BM
\end{array}\right](I_2\otimes\bar{R}_k^{-\frac{1}{2}}),
\end{align*}}
{\small \begin{align*}
\Pi_3=&\bar{R}_{k+1}^{\frac{1}{2}}\left[\begin{array}{cccc}\sqrt{\gamma} N_1'+\sqrt{\gamma} N_2'&0&0&0\\0&\sqrt{\gamma} N_1'&0&0\end{array}\right](I_2\otimes\bar{R}_k^{-\frac{1}{2}}),\\
\Pi_4=&\bar{R}_{k+1}^{\frac{1}{2}}\left[\begin{array}{cccc}0&0&\sqrt{\gamma} N_1'+\sqrt{\gamma} N_2'&0\\0&0&0&0\end{array}\right](I_2\otimes\bar{R}_k^{-\frac{1}{2}}),\\
\Pi_5=&\bar{R}_{k+1}^{\frac{1}{2}}\left[\begin{array}{cccc}0&0&0&\sqrt{\gamma} N_1'+\sqrt{\gamma} N_2'\\0&0&0&\sqrt{\gamma} N_1'\end{array}\right](I_2\otimes\bar{R}_k^{-\frac{1}{2}}).
\end{align*}}
\end{theorem}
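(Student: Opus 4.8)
The plan is to establish sufficiency by showing that any feasible tuple $(K_1,K_2,\{P_k\})$ of the LMIs \eqref{2019finite-LMI1} automatically satisfies the Lyapunov-type inequality \eqref{2019finite-Ly2}, so that finite-time stabilizability follows from the necessary-and-sufficient Theorem~\ref{2019finite-th6}. The starting observation is that the left-hand side of \eqref{2019finite-Ly2} is a single congruence $X_k(I_4\otimes P_k)X_k'$ with $X_k=[\,\sqrt{\bar\alpha}(\bar{\mathcal A}_{1,k}+\bar{\mathcal A}_{2,k})\ \ \sqrt{1-\bar\alpha}\bar{\mathcal A}_{1,k}\ \ \sqrt{\bar\alpha}(\bar{\mathcal C}_{1,k}+\bar{\mathcal C}_{2,k})\ \ \sqrt{1-\bar\alpha}\bar{\mathcal C}_{1,k}\,]$: expanding the four blocks reproduces the eight summands of \eqref{2019finite-Ly2}, where the cross terms carry the factor $\bar\alpha$ while the pure $\bar{\mathcal A}_{1,k}P_k\bar{\mathcal A}_{1,k}'$ and $\bar{\mathcal C}_{1,k}P_k\bar{\mathcal C}_{1,k}'$ terms recombine with coefficient $\bar\alpha+(1-\bar\alpha)=1$. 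By the Schur complement, \eqref{2019finite-Ly2} is then equivalent to negative definiteness of the bordered matrix with diagonal $(-P_{k+1},-(I_4\otimes P_k^{-1}))$ and off-diagonal $X_k$.

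Next I would discharge the inverse $P_k^{-1}$ using the size constraint $P_k\le\frac{\epsilon_2}{\epsilon_1}I_{2n}$, which yields $P_k^{-1}\ge\frac{\epsilon_1}{\epsilon_2}I_{2n}$ and hence $-(I_4\otimes P_k^{-1})\le-\frac{\epsilon_1}{\epsilon_2}I$. Because lowering the trailing diagonal block can only lower the whole bordered matrix in the Loewner order, negative definiteness of the matrix whose trailing blocks are the constants $-\frac{\epsilon_1}{\epsilon_2}I$ implies negative definiteness with $-(I_4\otimes P_k^{-1})$, i.e.\ the strict version of \eqref{2019finite-Ly2}; the strict inequality of course implies the non-strict form required by Theorem~\ref{2019finite-th6}. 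This replacement of $P_k^{-1}$ by the constant $\frac{\epsilon_1}{\epsilon_2}I$ is precisely the conservative move that turns the exact characterization into a one-directional sufficient condition, which is why Theorem~\ref{theorem4.3} is an ``if'' and not an ``iff''.

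The substantive work is isolating and absorbing the uncertainty $[\Delta K_{1,k}\ \Delta K_{2,k}]=MF_k[N_1\ N_2]$, $F_k'F_k\le I$. I would split each barred matrix into a nominal part, independent of $F_k$, and an uncertain part. The nominal parts assemble into the blocks $\sqrt{\bar\alpha}\Pi_1,\sqrt{1-\bar\alpha}\Pi_1,\sqrt{\bar\alpha}\bar{\mathcal C}_{1,k},\sqrt{1-\bar\alpha}\bar{\mathcal C}_{1,k}$ seen in the first block-row of \eqref{2019finite-LMI1}, since $\mathcal C_1$ is uncertainty-free and $\Pi_1$ is exactly the nominal $\bar R_{k+1}^{1/2}\mathrm{diag}(A_1+A_2+B(K_1+K_2),A_1+BK_1)\bar R_k^{-1/2}$. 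All uncertain contributions live in the off-diagonal blocks $(1,2),(1,3),(1,4)$ and share the single factor $F_k$, so they can be written as one norm-bounded term $\hat H\,(I\otimes F_k)\,\hat E+\hat E'(I\otimes F_k)'\hat H'$, with $\hat H$ built from $\bar R_{k+1}^{1/2}$ and the input channels ($BM$ for the drift and $DM$ for the diffusion) and $\hat E$ from $N_1,N_2$ and $\bar R_k^{-1/2}$. Applying the standard norm-bounded-uncertainty bound $\hat H(I\otimes F_k)\hat E+(\cdot)'\le\gamma^{-1}\hat H\hat H'+\gamma\,\hat E'\hat E$ for the fixed scalar $\gamma>0$, and then clearing the two resulting quadratic terms by one further Schur complement, produces exactly the extra bordered columns: the left factor collects into $\Pi_2$ against the diagonal $-\gamma\,I_2\otimes\frac{\epsilon_1}{\epsilon_2}I_{2n}$, and the right factor splits, channel by channel, into $\Pi_3,\Pi_4,\Pi_5$ (carrying $N_1'+N_2'$ and $N_1'$) against $-I_2\otimes\frac{\epsilon_1}{\epsilon_2}I_{2n}$. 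Reading the whole chain backwards gives \eqref{2019finite-LMI1}$\Rightarrow$\eqref{2019finite-Ly2} for every admissible $F_k$, and Theorem~\ref{2019finite-th6} closes the argument.

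The step I expect to be the main obstacle is this consolidation. The uncertainty enters through three distinct off-diagonal blocks — two from the drift channel ($\bar{\mathcal A}_{1,k}+\bar{\mathcal A}_{2,k}$ and $\bar{\mathcal A}_{1,k}$, via $BM$) and one from the diffusion channel ($\bar{\mathcal C}_{1,k}+\bar{\mathcal C}_{2,k}$, via $DM$) — and they come with different right factors ($N_1+N_2$ versus $N_1$). Forcing all of them under a single $F_k$ with the correct Kronecker augmentation, while carrying the congruences $\bar R_{k+1}^{1/2}(\cdot)\bar R_k^{-1/2}$ through and keeping track of the scalar weights, is what generates the specific coefficients $\bar\alpha^{3/2},(\sqrt{\bar\alpha}-\bar\alpha),\bar\alpha\sqrt{1-\bar\alpha}$ appearing in $\Pi_2$ and the $I_2\otimes$ duplications in the trailing diagonal blocks; verifying that these coefficients are exactly the expansion of $\sqrt{\bar\alpha}(\bar{\mathcal A}_{1,k}+\bar{\mathcal A}_{2,k})$ and $\sqrt{1-\bar\alpha}\bar{\mathcal A}_{1,k}$ is routine but error-prone. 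Finally, I would note that treating $\gamma$ as a prescribed constant rather than a variable is exactly what keeps \eqref{2019finite-LMI1} jointly linear in the unknowns $K_1,K_2$ and $\{P_k\}$, so that the feasibility problem is a genuine LMI.
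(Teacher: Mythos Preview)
Your approach is correct and essentially identical to the paper's: the paper's two-line proof first applies the Schur complement together with $P_k\le\frac{\epsilon_2}{\epsilon_1}I_{2n}$ to obtain the intermediate bordered inequality \eqref{2019finite-gsfvst} from \eqref{2019finite-Ly2} (your steps on replacing $-(I_4\otimes P_k^{-1})$ by $-\frac{\epsilon_1}{\epsilon_2}I$), and then invokes the Khargonekar--Petersen--Zhou robustness lemma (Theorem~2.7 in \cite{kmin}) plus one more Schur complement to absorb the norm-bounded perturbation $MF_k[N_1\ N_2]$ and arrive at \eqref{2019finite-LMI1}. Your more detailed unpacking of the uncertainty consolidation is exactly the content the paper compresses into that single citation.
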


{\textbf {Proof.}} By Schur's complement, we have a sufficient
condition from (\ref{2019finite-Ly2}) that
{\small\begin{align}\label{2019finite-gsfvst}
&\left[\begin{array}{cccccccccccc}
-P_{k+1}&\sqrt{\bar{\alpha}}(\bar{\mathcal {A}}_{1,k}+\bar{\mathcal {A}}_{2,k})&\sqrt{1-\bar{\alpha}}\bar{\mathcal {A}}_{1,k}&\sqrt{\bar{\alpha}}(\bar{\mathcal {C}}_{1,k}+\bar{\mathcal {C}}_{2,k})\\
*&-\frac{\epsilon_1}{\epsilon_2}I_{2n}&0&0\\
*&*&-\frac{\epsilon_1}{\epsilon_2}I_{2n}&0\\
*&*&*&-\frac{\epsilon_1}{\epsilon_2}I_{2n}\\
*&*&*&*
\end{array}\right.\nonumber\\
&\left.\begin{array}{cccccccccccc}
\sqrt{1-\bar{\alpha}}\bar{\mathcal {C}}_{1}\\
0\\
0\\
0\\
-\frac{\epsilon_1}{\epsilon_2}I_{2n}
\end{array}\right]<0.
\end{align}}
By Theorem 2.7 in \cite{kmin} and Schur's complement,
(\ref{2019finite-gsfvst}) is equivalent to (\ref{2019finite-LMI1}).
$\square$

\begin{remark}
Using the STM method to study the finite-time stability and
stabilization of linear discrete stochastic systems comes from
\cite{ziji-finite}, which is a main  motivation for this study. The
technical novelties compared with  non-mean-field linear stochastic
systems are the following aspects:

   (1) The coefficient matrices  of  the closed-loop system  (\ref{2019finite-system2}) have  uncertain parameters  $\Delta K_{1,k}$, $\Delta K_{2,k}$ and random variable  $\alpha_k$.
   Therefore, the  STMs   of  (\ref{2019finite-system2}) are  more complex in both mathematical derivations and expression forms  than the
   non-mean-field linear discrete stochastic system without  non-fragile control.

   (2) Based on the new STMs   and Lemma \ref{2019finite-Lem6} about a new
   operator  ``$\text{Tr}$'',  novel necessary and sufficient Lyapunov-type theorems (Theorems \ref{2019finite-th5} and \ref{2019finite-th6})    are  proved.
 Theorems \ref{2019finite-th5} and \ref{2019finite-th6}
   are easier to verify than Theorem~\ref{2019finite-th1} especially for larger $T>0$.
   As a corollary of Theorem~\ref{2019finite-th6}, Theorem~\ref{theorem4.3} presents  an LMI-based  sufficient condition for finite-time stabilization
   with the non-fragile control $u^F_k$, which is more easily verified.
\end{remark}

\vspace{1cm}
\section{Verification Example\label{sec:EX}}

\begin{figure}[!htb]
  \centering
  \includegraphics[width=3in]{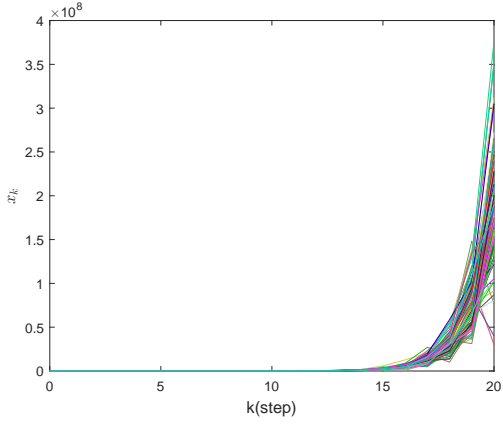}
  \caption{$x_k$ of the open-loop system (\ref{2019finite-eee1}).}
  \label{2019finite-x-open}
\end{figure}

\begin{figure}[!htb]
  \centering
  \includegraphics[width=3in]{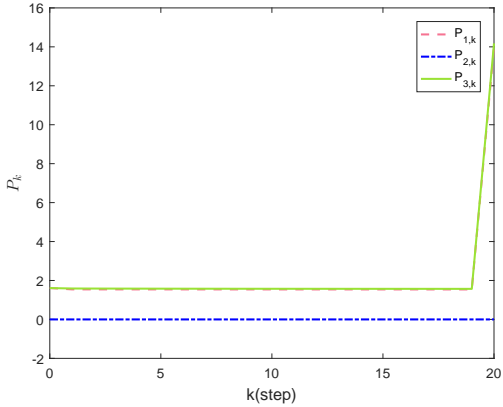}
  \caption{Feasible solutions $P_k$.}
  \label{2019finite-PandQ}
\end{figure}

\begin{figure}[!htb]
  \centering
  \includegraphics[width=3in]{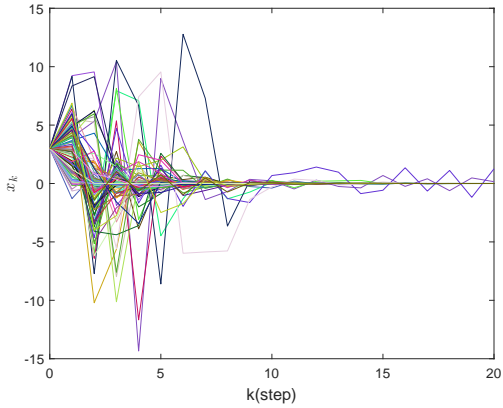}
  \caption{$x_k$ of the closed-loop system (\ref{2019finite-eee1}).}
  \label{2019finite-x-closed}
\end{figure}

\begin{figure}[!htb]
  \centering
  \includegraphics[width=3in]{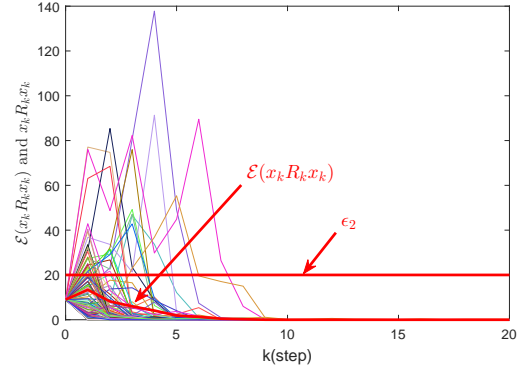}
  \caption{$x_k'Rx_k$ and $\mathcal {E} (x_k'Rx_k)$ of the closed-loop system (\ref{2019finite-eee1}).}
  \label{2019finite-ExRx}
\end{figure}

In the sequel, we present an application to the price control problem of a company's stock market,
which is proposed by \cite{Chen,Linyaning1}. In addition to being adjusted by major shareholders, the stock market is also affected by market fluctuations, changes in crude oil prices, and the average behavior and collective forecast of the stock market.
The deviation between the stock market price and the expected price trajectory is described by the following linear discrete mean-field system
\begin{align}\label{2019finite-eee1}
\begin{cases}
x_{k+1}=&A_1x_k+A_2\mathcal {E}x_k+Bu_k^F\\
&+(C_1x_k+C_2\mathcal {E}x_k+Du_k^F)w_k,
\end{cases}
\end{align}
where the state $x_k$ represents the deviation between the real stock price and the expected value, and the mean term
$\mathcal {E}x_k$ stands for the average impact of collective forecasting on the stock market. The noise $w_k$ can be regarded as the unpredictable influence caused by the continuous fluctuation of unemployment rate and other random events. $u_k$ represents the investment and adjustment of the company's production activities,  but due to the unreasonable organizational structure or internal corruption, the final investment or strategy adjustment is $u_k^F$. If the company expects to keep the deviation within the allowable range within $20$ trading days. We can achieve this goal by the idea of non-fragile finite-time stabilization.

If the parameters of the system (\ref{2019finite-eee1}) are $A_1=1.1833$, $A_2=1.2741$, $B=-1.3517$, $C_1=0.8188$, $C_2=-0.1491$, $D=-0.54$, $M=-0.1005$, $N_1=-0.6177$,
$N_2=0.4285$, and $x_0=3$. Fig. \ref{2019finite-x-open} shows the open-loop state trajectory of $100$ repeated simulations of system (\ref{2019finite-eee1}).
Set $\{R_k=e^{-0.1\times k}\}_{k\in {\mathcal N}_{20}}$, $\epsilon_1=10$, $\epsilon_2=20$, $T=20$, $\gamma=0.5086$. By Theorem \ref{theorem4.3}, we
can obtain the control gain parameters $K_1=0.9627$ and $K_2=0.7737$.
Feasible solutions $P_k=\left[\begin{array}{cc}P_{1,k}&P_{2,k}\\P_{2,k}&P_{3,k}\end{array}\right]$ are presented in Fig. \ref{2019finite-PandQ}.  The corresponding state response of system (\ref{2019finite-eee1}) with control $u_k^F$ is depicted in Fig. \ref{2019finite-x-closed}. The result is confirmed by the time evolution of $\mathcal {E} (x_k'R_kx_k)$ of the closed-loop system (\ref{2019finite-eee1}) presented in Fig. \ref{2019finite-ExRx}.

\section{Conclusion\label{sec:Concl}}
\hspace{0.13in} Several  kinds of STMs of LDMFS systems have been  firstly
presented in this paper and the non-fragile finite-time
stabilization problem has been studied. Based on the STM method, several
necessary and sufficient conditions for non-fragile finite-time stabilization  have been derived. The advantage
of the STM  method  is non-conservative. The feasibility and
effectiveness of the new design schemes have been confirmed by one
example.  We believe that the STM   method will have more applications in   stochastic
stability and  stabilization, which merits further study in our
future work.

\section{Appendix}

\textbf{The proof of Lemma 3.1:}

This lemma can be shown by induction. For any $k$,
if $l=k+1$, (10)  is evidently valid. Suppose that
(10)  holds for  $l=k+m$, i.e.,
{\small
\begin{align}\label{2019finite-as7}
&\left[
\begin{array}{cccc}
\psi_{k+m,k+1}(\sqrt{\bar{\alpha}}\mathcal {A}_{1,k}+\sqrt{\bar{\alpha}}\mathcal {A}_{2,k})\\
\psi_{k+m,k+1}\sqrt{1-\bar{\alpha}}\mathcal {A}_{1,k}\\
\psi_{k+m,k+1}(\sqrt{\bar{\alpha}}\mathcal {C}_{1}+\sqrt{\bar{\alpha}}\mathcal {C}_{2,k})\\
\psi_{k+m,k+1}\sqrt{1-\bar{\alpha}}\mathcal {C}_{1}\\
\end{array}
\right]\nonumber\\
=&\left(I_{4^{m-1}}\otimes\left[
\begin{array}{cccc}
\sqrt{\bar{\alpha}}\mathcal {A}_{1,k+m-1}+\sqrt{\bar{\alpha}}\mathcal {A}_{2,k+m-1}\\
\sqrt{1-\bar{\alpha}}\mathcal {A}_{1,k+m-1}\\
\sqrt{\bar{\alpha}}\mathcal {C}_{1}+\sqrt{\bar{\alpha}}\mathcal {C}_{2,k+m-1}\\
\sqrt{1-\bar{\alpha}}\mathcal {C}_{1}
\end{array}
\right]\right)\varphi_{k+m-1,k}.
\end{align}}
Then, we verify the case of $l=k+m+1$. By definition,
\begin{align*}
\psi_{k+m+1,k} =\left[
\begin{array}{cccc}
\psi_{k+m+1,k+1}(\sqrt{\bar{\alpha}}\mathcal {A}_{1,k}+\sqrt{\bar{\alpha}}\mathcal {A}_{2,k})\\
\psi_{k+m+1,k+1}\sqrt{1-\bar{\alpha}}\mathcal {A}_{1,k}\\
\psi_{k+m+1,k+1}(\sqrt{\bar{\alpha}}\mathcal {C}_{1}+\sqrt{\bar{\alpha}}\mathcal {C}_{2,k})\\
\psi_{k+m+1,k+1}\sqrt{1-\bar{\alpha}}\mathcal {C}_{1}\\
\end{array}
\right].
\end{align*}
Because of the arbitrariness
of $k$,  we have
$\psi_{k+m+1,k+1}=\varphi_{k+m+1,k+1}.$
Therefore,
\begin{align*}
&\psi_{k+m+1,k}\\
=&\left(I_4\otimes \left[I_{4^{m-1}}\otimes\left[
\begin{array}{cccc}
\sqrt{\bar{\alpha}}\mathcal {A}_{1,k+m}+\sqrt{\bar{\alpha}}\mathcal {A}_{2,k+m}\\
\sqrt{1-\bar{\alpha}}\mathcal {A}_{1,k+m}\\
\sqrt{\bar{\alpha}}\mathcal {C}_{1}+\sqrt{\bar{\alpha}}\mathcal {C}_{2,k+m}\\
\sqrt{1-\bar{\alpha}}\mathcal {C}_{1}
\end{array}
\right]\right]\right)\\
&\left[\begin{array}{ccc}
\varphi_{k+m,k+1}(\sqrt{\bar{\alpha}}\mathcal {A}_{1,k}+\sqrt{\bar{\alpha}}\mathcal {A}_{2,k})\\ \varphi_{k+m,k+1}\sqrt{1-\bar{\alpha}}\mathcal {A}_{1,k}\\
\varphi_{k+m,k+1}(\sqrt{\bar{\alpha}}\mathcal
{C}_{1}+\sqrt{\bar{\alpha}}\mathcal {C}_{2,k})\\
\varphi_{k+m,k+1}\sqrt{1-\bar{\alpha}}\mathcal {C}_{1}
\end{array}\right]\\
=&\left(I_{4^{m}}\otimes\left[
\begin{array}{cccc}
\sqrt{\bar{\alpha}}\mathcal {A}_{1,k+m}+\sqrt{\bar{\alpha}}\mathcal {A}_{2,k+m}\\
\sqrt{1-\bar{\alpha}}\mathcal {A}_{1,k+m}\\
\sqrt{\bar{\alpha}}\mathcal {C}_{1}+\sqrt{\bar{\alpha}}\mathcal {C}_{2,k+m}\\
\sqrt{1-\bar{\alpha}}\mathcal {C}_{1}
\end{array}
\right]\right)\psi_{k+m,k}.
\end{align*}
So (10) is proved.  $\square$

\textbf {The proof of Lemma 3.2:}

We first prove (12). Because
$\alpha_k$ and $x_k$ are independent of each other, for $k=l-1$, we
have
\begin{align*}
&\mathcal {E}\|\tilde{x}_l\|^2
=\mathcal {E} \big[\big(\tilde{A}_k\tilde{x}_k+\tilde{C}_k\tilde{x}_kw_k\big)'\big(\tilde{A}_k\tilde{x}_k+\tilde{C}_k\tilde{x}_kw_k\big)\big]\\
=&\mathcal {E} \big\{\big[(\mathcal {A}_{1,k}+\alpha_k\mathcal {A}_{2,k})\tilde{x}_{k}+(\mathcal {C}_{1}+\alpha_k\mathcal {C}_{2,k})\tilde{x}_{k}w_k\big]'\\
&\big[(\mathcal {A}_{1,k}+\alpha_k\mathcal {A}_{2,k})\tilde{x}_{k}+(\mathcal {C}_{1}+\alpha_k\mathcal {C}_{2,k})\tilde{x}_{k}w_k\big]\big\}\\
=&\mathcal {E} \big[\tilde{x}_k'(\mathcal {A}_{1,k}'\mathcal {A}_{1,k}+\bar{\alpha}\mathcal {A}_{1,k}'\mathcal {A}_{2,k}+\bar{\alpha}\mathcal {A}_{2,k}'\mathcal {A}_{1,k}
+\bar{\alpha}\mathcal {A}_{2,k}'\mathcal {A}_{2,k})\tilde{x}_k\\
&+\tilde{x}_k'(\mathcal {C}_{1}'\mathcal {C}_{1}+\bar{\alpha}\mathcal {C}_{1}'\mathcal {C}_{2,k}+\bar{\alpha}\mathcal {C}_{2,k}'\mathcal {C}_{1}+\bar{\alpha}\mathcal {C}_{2,k}'\mathcal {C}_{2,k})\tilde{x}_k\big]\\
=&\mathcal {E}\|\psi_{k+1,k}\tilde{x}_k\|^2.
\end{align*}
Hence, equation (12) holds for $k=l-1$. Assume that $\mathcal {E}\|\tilde{x}_l\|^2=\mathcal
{E}\|\psi_{l,k}\tilde{x}_k\|^2$ holds for $k=l-m$, $1<m<l$. Now we
need to prove (12) in the case of $k=l-m-1$.
By Lemma~3.1, it can be
seen that
\begin{align*}
&\mathcal {E}\|\tilde{x}_{l}\|^2
=\mathcal {E} \|\psi_{l,l-m}\tilde{x}_{l-m}\|^2\\
=&\mathcal {E} \big[\tilde{x}_{l-m-1}'(\mathcal {A}_{1,l-m-1}'\psi_{l,l-m}'\psi_{l,l-m}\mathcal {A}_{1,l-m-1}\\
&+\bar{\alpha}\mathcal {A}_{1,l-m-1}'\psi_{l,l-m}'\psi_{l,l-m}\mathcal {A}_{2,l-m-1}\\
&+\bar{\alpha}\mathcal {A}_{2,l-m-1}'\psi_{l,l-m}'\psi_{l,l-m}\mathcal {A}_{1,l-m-1}\\
&+\bar{\alpha}\mathcal {A}_{2,l-m-1}'\psi_{l,l-m}'\psi_{l,l-m}\mathcal {A}_{2,l-m-1})\tilde{x}_{l-m-1}\\
&+\tilde{x}_{l-m-1}'(\mathcal {C}_{1}'\psi_{l,l-m}'\psi_{l,l-m}\mathcal {C}_{1}+\bar{\alpha}\mathcal {C}_{1}'\psi_{l,l-m}'\psi_{l,l-m}\mathcal {C}_{2,l-m-1}\\
&+\bar{\alpha}\mathcal {C}_{2,l-m-1}'\psi_{l,l-m}'\psi_{l,l-m}\mathcal {C}_{1}+\bar{\alpha}\mathcal {C}_{2,l-m-1}'\psi_{l,l-m}'\psi_{l,l-m}\\
&\cdot\mathcal {C}_{2,l-m-1})\tilde{x}_{l-m-1}\big]\\
=&\mathcal {E}\|\psi_{l,l-m-1}\tilde{x}_{l-m-1}\|^2.
\end{align*}
So (12) is shown.  Finally, we  prove
(13). By (10), we have
{\small \begin{align*}
\psi_{l,k}
=&\left[
\begin{array}{cccc}
\psi_{l,k+1}(\sqrt{\bar{\alpha}}\mathcal {A}_{1,k}+\sqrt{\bar{\alpha}}\mathcal {A}_{2,k})\\
\psi_{l,k+1}\sqrt{1-\bar{\alpha}}\mathcal {A}_{1,k}\\
\psi_{l,k+1}(\sqrt{\bar{\alpha}}\mathcal {C}_{1}+\sqrt{\bar{\alpha}}\mathcal {C}_{2,k})\\
\psi_{l,k+1}\sqrt{1-\bar{\alpha}}\mathcal {C}_{1}\\
\end{array}
\right]\\
=&\left(I_{4^{l-k-1}}\otimes\left[
\begin{array}{cccc}
\sqrt{\bar{\alpha}}\mathcal {A}_{1,l-1}+\sqrt{\bar{\alpha}}\mathcal {A}_{2,l-1}\\
\sqrt{1-\bar{\alpha}}\mathcal {A}_{1,l-1}\\
\sqrt{\bar{\alpha}}\mathcal {C}_{1}+\sqrt{\bar{\alpha}}\mathcal {C}_{2,l-1}\\
\sqrt{1-\bar{\alpha}}\mathcal {C}_{1}
\end{array}
\right]\right)\psi_{l-1,k}.
\end{align*}}
 Note that there must exist elementary matrices
$P_{l,k}\in\mathcal {R}^{(4^{l-k}2n)\times(4^{l-k}2n)}$ and
$P_{l-1,k}\in\mathcal {R}^{(4^{l-k-1}2n)\times(4^{l-k-1}2n)}$ with
$P_{l,k}'P_{l,k}=I_{4^{l-k}2n}$ and
$P_{l-1,k}'P_{l-1,k}=I_{4^{l-k-1}2n}$, such that
\begin{align*}
&I_{4^{l-k-1}}\otimes\left[\begin{array}{cccc}
\sqrt{\bar{\alpha}}\mathcal {A}_{1,l-1}+\sqrt{\bar{\alpha}}\mathcal {A}_{2,l-1}\\
\sqrt{1-\bar{\alpha}}\mathcal {A}_{1,l-1}\\
\sqrt{\bar{\alpha}}\mathcal {C}_{1}+\sqrt{\bar{\alpha}}\mathcal {C}_{2,l-1}\\
\sqrt{1-\bar{\alpha}}\mathcal {C}_{1}\end{array}\right]\\
=&P_{l,k}^{-1}\left[
\begin{array}{cc}
I_{4^{l-k-1}}\otimes (\sqrt{\bar{\alpha}}\mathcal {A}_{1,l-1}+\sqrt{\bar{\alpha}}\mathcal {A}_{2,l-1})\\
I_{4^{l-k-1}}\otimes \sqrt{1-\bar{\alpha}}\mathcal {A}_{1,l-1}\\
I_{4^{l-k-1}}\otimes (\sqrt{\bar{\alpha}}\mathcal {C}_{1}+\sqrt{\bar{\alpha}}\mathcal {C}_{2,l-1})\\
I_{4^{l-k-1}}\otimes \sqrt{1-\bar{\alpha}}\mathcal {C}_{1}\\
\end{array}
\right]P_{l-1,k}.
\end{align*}
From (12), we can get that
{\small\begin{align*}
&\psi_{l,k}=\left(I_{4^{l-k-1}}\otimes\left[
\begin{array}{cccc}
\sqrt{\bar{\alpha}}\mathcal {A}_{1,l-1}+\sqrt{\bar{\alpha}}\mathcal {A}_{2,l-1}\\
\sqrt{1-\bar{\alpha}}\mathcal {A}_{1,l-1}\\
\sqrt{\bar{\alpha}}\mathcal {C}_{1}+\sqrt{\bar{\alpha}}\mathcal {C}_{2,l-1}\\
\sqrt{1-\bar{\alpha}}\mathcal {C}_{1}
\end{array}
\right]\right)\psi_{l-1,k}\\
\Rightarrow&\psi_{l,k}=P_{l,k}^{-1}\left[
\begin{array}{cc}
I_{4^{l-k-1}}\otimes (\sqrt{\bar{\alpha}}\mathcal {A}_{1,l-1}+\sqrt{\bar{\alpha}}\mathcal {A}_{2,l-1})\\
I_{4^{l-k-1}}\otimes \sqrt{1-\bar{\alpha}}\mathcal {A}_{1,l-1}\\
I_{4^{l-k-1}}\otimes (\sqrt{\bar{\alpha}}\mathcal {C}_{1}+\sqrt{\bar{\alpha}}\mathcal {C}_{2,l-1})\\
I_{4^{l-k-1}}\otimes \sqrt{1-\bar{\alpha}}\mathcal {C}_{1}\\
\end{array}
\right]\\
&\ \ \ \ \ \ \ \ \ \cdot P_{l-1,k}\psi_{l-1,k}\\
\Rightarrow&P_{l,k}\psi_{l,k}=\left[
\begin{array}{cc}
I_{4^{l-k-1}}\otimes (\sqrt{\bar{\alpha}}\mathcal {A}_{1,l-1}+\sqrt{\bar{\alpha}}\mathcal {A}_{2,l-1})\\
I_{4^{l-k-1}}\otimes \sqrt{1-\bar{\alpha}}\mathcal {A}_{1,l-1}\\
I_{4^{l-k-1}}\otimes (\sqrt{\bar{\alpha}}\mathcal {C}_{1}+\sqrt{\bar{\alpha}}\mathcal {C}_{2,l-1})\\
I_{4^{l-k-1}}\otimes \sqrt{1-\bar{\alpha}}\mathcal {C}_{1}\\
\end{array}
\right]\\
&\ \ \ \ \ \ \ \ \ \ \ \ \ \cdot P_{l-1,k}\psi_{l-1,k}.
\end{align*}}
Set $\phi_{l,k}=P_{l,k}\psi_{l,k}$, then (13)
is proved.  $\square$

\textbf {The proof of Lemma 3.3:}

 Note that $R_{k+1}$ and $R_k$ are given positive definite
matrices. It is easy to see that
\begin{align*}
&\bar{x}_{k+1}
=\bar{R}_{k+1}^{\frac{1}{2}}\tilde{x}_{k+1}\\
=&(\bar{R}_{k+1}^{\frac{1}{2}}\mathcal {A}_{1,k}\mathcal
{R}_k^{-\frac{1}{2}}+\alpha_k\bar{R}_{k+1}^{\frac{1}{2}}\mathcal
{A}_{2,k}\bar{R}_k^{-\frac{1}{2}})\bar{x}_k\\
&+(\bar
{R}_{k+1}^{\frac{1}{2}}\mathcal {C}_{1}\bar
{R}_k^{-\frac{1}{2}}+\alpha_k\bar{R}_{k+1}^{\frac{1}{2}}\mathcal
{C}_{2,k}\bar{R}_k^{-\frac{1}{2}})\bar{x}_kw_k.
\end{align*}
So the  dynamic system of $\bar{x}_k$ is obtained. The STM
$\bar{\phi}_{l,k}$ can be given via  Lemma
3.2, so can  $\bar{\psi}_{l,k}$. The proof is completed. $\square$

\textbf {The proof of Lemma 3.4:}

By Lemma 2.1, $I_{4^{l-k}}\otimes \bar{R}_l$ can be broken down into $(I_{4^{l-k}}\otimes \bar{R}_l^{\frac{1}{2}})(I_{4^{l-k}}\otimes \bar
{R}_l^{\frac{1}{2}})$. So, the problem reduces into proving the following equation:
\begin{align}\label{2019finite-fdsds}
(I_{4^{l-k}}\otimes \bar{R}_{l}^{\frac{1}{2}})\phi_{l,k}
=\bar{\phi}_{l,k}\bar{R}_k^{\frac{1}{2}}.
\end{align}
For $l=k$, in view of $\phi_{k,k}=\bar{\phi}_{k,k}=I_{2n}$, we have
$ (I_1\otimes \bar{R}_k^{\frac{1}{2}})\phi_{k,k}=\bar
{R}_k^{\frac{1}{2}}=\bar{\phi}_{k,k}\bar{R}_k^{\frac{1}{2}}. $
Hence, (\ref{2019finite-fdsds}) holds for $l=k$. We suppose (\ref{2019finite-fdsds}) holds when $l=k+i-1$, i.e.,
$(I_{4^{i-1}}\otimes \bar
{R}_{k+i-1}^{\frac{1}{2}})\phi_{k+i-1,k}=\bar{\phi}_{k+i-1,k}\bar
{R}_k^{\frac{1}{2}}$, then only the equation $(I_{4^i}\otimes
\bar{R}_{k+i}^{\frac{1}{2}})\phi_{k+i,k}=\bar{\phi}_{k+i,k}\bar
{R}_k^{\frac{1}{2}}$ needs to be proved. By Lemma 2.1, it can be seen
that
{\footnotesize\begin{align*}
&(I_{4^i}\otimes \bar{R}_{k+i}^{\frac{1}{2}})\phi_{k+i,k}\\
%=&I_4\otimes(I_{4^{i-1}}\otimes \bar{R}_{k+i}^{\frac{1}{2}})\left[
%\begin{array}{cccc}
%(I_{4^{i-1}}\otimes (\sqrt{\bar{\alpha}}\mathcal {A}_{1,k+i-1}+\sqrt{\bar{\alpha}}\mathcal {A}_{2,k+i-1}))\phi_{k+i-1,k}\\
%(I_{4^{i-1}}\otimes \sqrt{1-\bar{\alpha}}\mathcal {A}_{1,k+i-1})\phi_{k+i-1,k}\\
%(I_{4^{i-1}}\otimes (\sqrt{\bar{\alpha}}\mathcal {C}_{1}+\sqrt{\bar{\alpha}}\mathcal {C}_{2,k+i-1}))\phi_{k+i-1,k}\\
%(I_{4^{i-1}}\otimes \sqrt{1-\bar{\alpha}}\mathcal
%{C}_{1})\phi_{k+i-1,k}
%\end{array}
%\right]\\
=&\left[
\begin{array}{cccc}
(I_{4^{i-1}}\otimes (\sqrt{\bar{\alpha}}\bar{R}_{k+i}\mathcal {A}_{1,k+i-1}+\sqrt{\bar{\alpha}}\bar{R}_{k+i}\mathcal {A}_{2,k+i-1}))\phi_{k+i-1,k}\\
(I_{4^{i-1}}\otimes \sqrt{1-\bar{\alpha}}\bar{R}_{k+i}\mathcal {A}_{1,k+i-1})\phi_{k+i-1,k}\\
(I_{4^{i-1}}\otimes (\sqrt{\bar{\alpha}}\bar{R}_{k+i}\mathcal {C}_{1}+\sqrt{\bar{\alpha}}\bar{R}_{k+i}\mathcal {C}_{2,k+i-1}))\phi_{k+i-1,k}\\
(I_{4^{i-1}}\otimes \sqrt{1-\bar{\alpha}}\bar{R}_{k+i}\mathcal
{C}_{1})\phi_{k+i-1,k}
\end{array}
\right]\\
=&\left[
\begin{array}{cccc}
\Theta_1'&
\Theta_2'&
\Theta_3'&
\Theta_4'
\end{array}
\right]'\\
=&\bar{\phi}_{k+i,k}\bar{R}_k^{\frac{1}{2}},
\end{align*}}
where
{\footnotesize\begin{align*}
    \Theta_1=&(I_{4^{i-1}}\otimes (\sqrt{\bar{\alpha}}\bar{R}_{k+i}\mathcal {A}_{1,k+i-1}+\sqrt{\bar{\alpha}}\bar{R}_{k+i}\mathcal {A}_{2,k+i-1}))\\
    &\cdot(I_{4^{i-1}}\otimes \bar{R}_{k+i-1}^{-\frac{1}{2}})(I_{4^{i-1}}\otimes \bar{R}_{k+i-1}^{\frac{1}{2}})\phi_{k+i-1,k},\\
    \Theta_2=&(I_{4^{i-1}}\otimes (\sqrt{1-\bar{\alpha}}\bar{R}_{k+i}\mathcal {A}_{1,k+i-1}))(I_{4^{i-1}}\otimes \bar{R}_{k+i-1}^{-\frac{1}{2}})\\
    &\cdot(I_{4^{i-1}}\otimes \bar{R}_{k+i-1}^{\frac{1}{2}})\phi_{k+i-1,k},\\
    \Theta_3=&(I_{4^{i-1}}\otimes (\sqrt{\bar{\alpha}}\bar{R}_{k+i}\mathcal {C}_{1}+\sqrt{\bar{\alpha}}\bar{R}_{k+i}\mathcal {C}_{2,k+i-1}))(I_{4^{i-1}}\otimes \bar{R}_{k+i-1}^{-\frac{1}{2}})\\
    &\cdot(I_{4^{i-1}}\otimes \bar{R}_{k+i-1}^{\frac{1}{2}})\phi_{k+i-1,k},\\
    \Theta_4=&(I_{4^{i-1}}\otimes (\sqrt{1-\bar{\alpha}}\bar{R}_{k+i}\mathcal{C}_{1}))(I_{4^{i-1}}\otimes \bar{R}_{k+i-1}^{-\frac{1}{2}})(I_{4^{i-1}}\otimes \bar{R}_{k+i-1}^{\frac{1}{2}})\\
    &\cdot\phi_{k+i-1,k}.
\end{align*}}
This completes the proof. $\square$

\textbf {The proof of Lemma 3.5:}

We also use the induction principle to prove the lemma. Obviously, in the case of $k=l$, (15) is right.  Suppose that for $k=i<l$,
(15) holds, i.e.,
$
\psi'_{l,i}(I_{4^{l-i}}\otimes \bar{R}_l)\psi_{l,i}=\bar
{R}_i^{\frac{1}{2}}\bar{\psi}'_{l,i}\bar{\psi}_{l,i}\bar
{R}_i^{\frac{1}{2}}.
$
Then we only need to show
$
\psi'_{l,i-1}(I_{4^{l-i+1}}\otimes \bar
{R}_l)\psi_{l,i-1}=\bar{R}_{i-1}^{\frac{1}{2}}\bar{\psi}'_{l,i-1}\bar{\psi}_{l,i-1}\bar
{R}_{i-1}^{\frac{1}{2}}.
$
The right hand side of the above equation can be computed as
{\footnotesize \begin{align*}
&\bar{R}_{i-1}^{\frac{1}{2}}\bar{\psi}'_{l,i-1}\bar{\psi}_{l,i-1}\bar{R}_{i-1}^{\frac{1}{2}}\\
=&\bar{R}_{i-1}^{\frac{1}{2}}\left[
\begin{array}{cccc}
\bar{\psi}_{l,i}(\sqrt{\bar{\alpha}}\bar{R}_i^{\frac{1}{2}}\mathcal {A}_{1,i-1}\bar{R}_{i-1}^{-\frac{1}{2}}+\sqrt{\bar{\alpha}}\bar{R}_i^{\frac{1}{2}}\mathcal {A}_{2,i-1}\bar{R}_{i-1}^{-\frac{1}{2}})\\
\bar{\psi}_{l,i}(\sqrt{1-\bar{\alpha}}\bar{R}_i^{\frac{1}{2}}\mathcal {A}_{1,i-1}\bar{R}_{i-1}^{-\frac{1}{2}})\\
\bar{\psi}_{l,i}(\sqrt{\bar{\alpha}}\bar{R}_i^{\frac{1}{2}}\mathcal {C}_{1}\bar{R}_{i-1}^{-\frac{1}{2}}+\sqrt{\bar{\alpha}}\bar{R}_i^{\frac{1}{2}}\mathcal {C}_{2,i-1}\bar{R}_{i-1}^{-\frac{1}{2}})\\
\bar{\psi}_{l,i}(\sqrt{1-\bar{\alpha}}\bar{R}_i^{\frac{1}{2}}\mathcal {C}_{1}\bar{R}_{i-1}^{-\frac{1}{2}})\\
\end{array}
\right]'\\
&\cdot\left[
\begin{array}{cccc}
\bar{\psi}_{l,i}(\sqrt{\bar{\alpha}}\bar{R}_i^{\frac{1}{2}}\mathcal {A}_{1,i-1}\bar{R}_{i-1}^{-\frac{1}{2}}+\sqrt{\bar{\alpha}}\bar{R}_i^{\frac{1}{2}}\mathcal {A}_{2,i-1}\bar{R}_{i-1}^{-\frac{1}{2}})\\
\bar{\psi}_{l,i}(\sqrt{1-\bar{\alpha}}\bar{R}_i^{\frac{1}{2}}\mathcal {A}_{1,i-1}\bar{R}_{i-1}^{-\frac{1}{2}})\\
\bar{\psi}_{l,i}(\sqrt{\bar{\alpha}}\bar {R}_i^{\frac{1}{2}}\mathcal {C}_{1}\bar{R}_{i-1}^{-\frac{1}{2}}+\sqrt{\bar{\alpha}}\bar{R}_i^{\frac{1}{2}}\mathcal {C}_{2,i-1}\bar{R}_{i-1}^{-\frac{1}{2}})\\
\bar{\psi}_{l,i}(\sqrt{1-\bar{\alpha}}\bar{R}_i^{\frac{1}{2}}\mathcal {C}_{1}\bar{R}_{i-1}^{-\frac{1}{2}})\\
\end{array}
\right]\bar{R}_{i-1}^{\frac{1}{2}}\\
=&\left[
\begin{array}{cccc}
\bar{\psi}_{l,i}(\sqrt{\bar{\alpha}}\bar{R}_i^{\frac{1}{2}}\mathcal {A}_{1,i-1}+\sqrt{\bar{\alpha}}\bar {R}_i^{\frac{1}{2}}\mathcal {A}_{2,i-1})\\
\bar{\psi}_{l,i}(\sqrt{1-\bar{\alpha}}\bar{R}_i^{\frac{1}{2}}\mathcal {A}_{1,i-1})\\
\bar{\psi}_{l,i}(\sqrt{\bar{\alpha}}\bar{R}_i^{\frac{1}{2}}\mathcal {C}_{1}+\sqrt{\bar{\alpha}}\bar{R}_i^{\frac{1}{2}}\mathcal {C}_{2,i-1})\\
\bar{\psi}_{l,i}(\sqrt{1-\bar{\alpha}}\bar{R}_i^{\frac{1}{2}}\mathcal {C}_{1})\\
\end{array}
\right]'\\
&\cdot\left[
\begin{array}{cccc}
\bar{\psi}_{l,i}(\sqrt{\bar{\alpha}}\bar{R}_i^{\frac{1}{2}}\mathcal {A}_{1,i-1}+\sqrt{\bar{\alpha}}\bar{R}_i^{\frac{1}{2}}\mathcal {A}_{2,i-1})\\
\bar{\psi}_{l,i}(\sqrt{1-\bar{\alpha}}\bar{R}_i^{\frac{1}{2}}\mathcal {A}_{1,i-1})\\
\bar{\psi}_{l,i}(\sqrt{\bar{\alpha}}\bar{R}_i^{\frac{1}{2}}\mathcal {C}_{1}+\sqrt{\bar{\alpha}}\bar{R}_i^{\frac{1}{2}}\mathcal {C}_{2,i-1})\\
\bar{\psi}_{l,i}(\sqrt{1-\bar{\alpha}}\bar{R}_i^{\frac{1}{2}}\mathcal {C}_{1})\\
\end{array}
\right]\\
=&\psi'_{l,i-1}(I_{4^{l-i+1}}\otimes \bar{R}_i)\psi_{l,i-1}.
\end{align*}}
This lemma is proved. $\square$

\textbf {The proof of Theorem 3.1:}

We first prove {\textbf {(b)}$\Rightarrow${\textbf {(a)} in Theorem
3.1. By Lemma 3.3, LDMFS system
(2) is finite-time stabilizable with respect
to $(\epsilon_1, \epsilon_2, T, \{R_k\}_{k\in{\mathcal N}_{T}})$ iff the system (14) is
finite-time stable with respect to $(\epsilon_1, \epsilon_2, T, \{I_{2n})\}$. If
\begin{align}\label{2019finite-cxx1}
x_0'R_0x_0\leq \epsilon_1,
\end{align}
then, by Lemma 3.4, we have
\begin{align}\label{2019finite-cxx2}
\mathcal {E}(x_k'R_kx_k)=&\bar{x}'_0\bar{\phi}'_{k,0}\bar{\phi}_{k,0}\bar{x}_0
=\tilde{x}_0'\bar
{R}_0^{\frac{1}{2}}\bar{\phi}'_{k,0}\bar{\phi}_{k,0}\bar
{R}_0^{\frac{1}{2}}\tilde{x}_0\nonumber\\
=&\tilde{x}_0'\phi'_{k,0}(I_{4^{k}}\otimes
\bar{R}_k)\phi_{k,0}\tilde{x}_0.
\end{align}
When $x_0=0$, it directly leads to $\mathcal
{E}(x_k'Rx_k)\equiv0<\epsilon_2$. When $x_0\neq0$, by
(\ref{2019finite-cxx1}) and (\ref{2019finite-cxx2}), we obtain
$
\mathcal {E}(x_k'R_kx_k)\leq \tilde{x}_0'\frac{\epsilon_2}{\epsilon_1}\bar
{R}_0\tilde{x}_0\leq \epsilon_2, k\in {\mathcal N}_T.
$
All in all, for any $x_0\in\mathcal{R}^n$ satisfying
(\ref{2019finite-cxx1}), it can be always concluded that $\mathcal
{E}(x_k'R_kx_k)\leq \epsilon_2$. Thus {\textbf {(a)} holds.

{\textbf {(a)}$\Rightarrow${\textbf {(b)}: In the case of that
system (2) is finite-time stabilizable with
respect to $(\epsilon_1, \epsilon_2, T, \{R_k\}_{k\in{\mathcal N}_{T}})$, (16)
must hold, otherwise,  there exist $k_0\in\mathcal {N}_T$ and
$\tilde{x}_0$ with $\tilde{x}_0'\bar {R}_0\tilde{x}_0=\epsilon_1$,
such  that
$$
\mathcal
{E}(\tilde{x}_{k_0}'\bar{R}_{k_0}\tilde{x}_{k_0})=\tilde{x}_0'\phi_{k_0,0}'(I_{4^{k_0}}\otimes
\bar
{R}_{k_0})\phi_{k_0,0}\tilde{x}_0\geq\tilde{x}_0'\frac{\epsilon_2}{\epsilon_1}\tilde{x}_0\geq
\epsilon_2.
$$
This contradicts the definition of finite-time stability. Therefore, {\textbf {(b)} is
derived.

By considering Lemma 3.4 and {\textbf {(b)}, we can get that
$\phi'_{k,0}(I_{4^{k}}\otimes \bar{R}_k)\phi_{k,0}=\bar
{R}_0^{\frac{1}{2}}\bar{\phi}'_{k,0}\bar{\phi}_{k,0}\bar
{R}_0^{\frac{1}{2}}$. Therefore {\textbf {(b)}$\Leftrightarrow${\textbf {(c)}. Analogously,
according to Lemmas 3.4 and 3.5,
it follows that {\textbf {(a)}$\Leftrightarrow${\textbf {(d)}$\Leftrightarrow${\textbf {(e)}.

{\textbf {(b)}$\Leftrightarrow${\textbf {(f)}: By Lemma 2.1 and
$\bar{R}_k>0$, we have    $I_{4^k}\otimes\bar
{R}_k=(I_{4^k}\otimes \bar{R}_k^{1/2})(I_{4^k}\otimes \bar{R}_k^{1/2})$, where $\bar{R}_k^{1/2}>0$. Considering {\textbf {(b)}, it can be obtained that
\begin{align*}
&\phi_{k,0}'(I_{4^k}\otimes \bar{R}_k)\phi_{k,0}\leq\frac{\epsilon_2}{\epsilon_1}\bar{R}_0\\
\Leftrightarrow&\phi_{k,0}'(I_{4^k}\otimes \bar{R}_k^{1/2})(I_{4^k}\otimes \bar{R}_k^{1/2})\phi_{k,0}\leq\frac{\epsilon_2}{\epsilon_1}\bar{R}_0\\
\Leftrightarrow&\bar{R}_0^{-1/2}\phi_{k,0}'(I_{4^k}\otimes
\bar{R}_k^{1/2})(I_{4^k}\otimes \bar
{R}_k^{1/2})\phi_{k,0}\bar{R}_0^{-1/2}\leq\frac{\epsilon_2}{\epsilon_1}I_{2n}.
\end{align*}
Through Schur's complement lemma, the above relationship yields that
{\small
\begin{align*}
&\left[
\begin{array}{ccccc}
-\frac{\epsilon_2}{\epsilon_1}I_{2n}&\bar{R}_0^{-1/2}\phi_{k,0}'(I_{4^k}\otimes \bar{R}_k^{1/2})\\
(I_{4^k}\otimes \bar{R}_k^{1/2})\phi_{k,0}\bar
{R}_0^{-1/2}&-I_{4^k\times2n}
\end{array}
\right]\leq0\\
\Leftrightarrow&[(I_{4^k}\otimes \bar
{R}_k^{1/2})\phi_{k,0}\bar{R}_0^{-1/2}][\bar
{R}_0^{-1/2}\phi'_{k,0}(I_{4^k}\otimes \bar
{R}_k^{1/2})]\\
&-\frac{\epsilon_2}{\epsilon_1}I_{4^k\times2n}\leq0.
\end{align*}}
Set $P_k=\phi_{k,0}\bar{R}_0^{-1}\phi'_{k,0}$, then {\textbf {(b)} is
equivalent  to
\begin{align}\label{2019finite-gfbvcv}
(I_{4^k}\otimes \bar{R}_k^{1/2})P_k(I_{4^k}\otimes \bar
{R}_k^{1/2})-\frac{\epsilon_2}{\epsilon_1}I_{4^k\times2n}\leq0.
\end{align}
Pre-multiplying  and post-multiplying  (\ref{2019finite-gfbvcv}) by
$(I_{4^k}\otimes\bar{R}_k^{-1/2})=(I_{4^k}\otimes\bar
{R}_k^{-1/2})'$,  it can be seen that (\ref{2019finite-gfbvcv}) is
equivalent  to
$
P_k-\frac{\epsilon_2}{\epsilon_1}(I_{4^k}\otimes \bar{R}_k^{-1})\leq0.
$
In addition, by the definition of $P_k$,  it is obvious that
$P_0=\bar{R}_0^{-1}$. Hence, in order to show
{\textbf {(b)}$\Leftrightarrow${\textbf {(f)}, we only need to show that $P_k$   satisfies
\begin{align*}
P_{k+1}=&\left[\begin{array}{cccccc}
I_{4^{k}}\otimes (\sqrt{\bar{\alpha}}\mathcal {A}_{1,k}+\sqrt{\bar{\alpha}}\mathcal {A}_{2,k})\\
I_{4^{k}}\otimes \sqrt{1-\bar{\alpha}}\mathcal {A}_{1,k}\\
I_{4^{k}}\otimes (\sqrt{\bar{\alpha}}\mathcal {C}_{1}+\sqrt{\bar{\alpha}}\mathcal {C}_{2,k})\\
I_{4^{k}}\otimes \sqrt{1-\bar{\alpha}}\mathcal {C}_{1}
\end{array}\right]P_k\\
&\left[\begin{array}{cccccc}
I_{4^{k}}\otimes (\sqrt{\bar{\alpha}}\mathcal {A}_{1,k}+\sqrt{\bar{\alpha}}\mathcal {A}_{2,k})\\
I_{4^{k}}\otimes \sqrt{1-\bar{\alpha}}\mathcal {A}_{1,k}\\
I_{4^{k}}\otimes (\sqrt{\bar{\alpha}}\mathcal {C}_{1}+\sqrt{\bar{\alpha}}\mathcal {C}_{2,k})\\
I_{4^{k}}\otimes \sqrt{1-\bar{\alpha}}\mathcal {C}_{1}
\end{array}\right]',
\end{align*}
which can be derived by applying
 the expression of
$\phi_{k,0}$ in Lemma 3.2 and the definition of
$P_k=\phi_{k,0}\bar{R}_0^{-1}\phi'_{k,0}$.
The proof is ended.  $\square$

\end{document}